\numberwithin{equation}{section}    % NUMBERING EQUATIONS, THMS ECT ...WITHIN SECTION
\def\Pcal{\ensuremath{{\mathcal P}}}
\def\Mcal{\ensuremath{{\mathcal M}}}
\numberwithin{equation}{section}
\newtheorem{theorem}{Theorem}[section]
\newtheorem{lemma}{Lemma}[section]
\newtheorem{corollary}{Corollary}[section]
\newtheorem{proposition}{Proposition}[section]
\newtheorem{example}{Example}[section]
\newtheorem{definition}{Definition}[section]
\newtheorem{assumption}{Assumption}
\renewenvironment{proof}{\vspace{1ex}\noindent{\bf
 Proof.}\hspace{0.5em}}{\hfill\qed\vspace{3ex}}
\title{A Globally Linearly Convergent Method for Pointwise Quadratically Supportable Convex-Concave Saddle Point Problems}
\author{
D. Russell Luke\thanks{
Institut f\"ur Numerische und Angewandte Mathematik,
Universit\"at G\"ottingen,
37083 G\"ottingen, Germany.  The research of Luke was supported in part by the 
German Research Foundation grants SFB755-A4 and GRK2088-B5.
} and Ron Shefi\thanks{
Institut f\"ur Numerische und Angewandte Mathematik,
Universit\"at G\"ottingen,
37083 G\"ottingen, Germany. The research of Shefi was supported by the German Research Foundation grant SFB755-A4}
}
\date{\today}
\begin{document}
\maketitle

\begin{abstract}
We study the \emph{Proximal Alternating Predictor-Corrector} (PAPC) algorithm 
introduced recently by Drori, Sabach and Teboulle \cite{drori2015simple} to solve 
nonsmooth structured convex-concave
saddle point problems consisting of the sum of a smooth convex function, a finite
collection of nonsmooth convex functions and bilinear terms.
We introduce the notion of pointwise quadratic supportability, which is 
a relaxation of a standard strong convexity assumption and allows us to show that 
the primal sequence is R-linearly convergent to an optimal solution and the 
primal-dual sequence is globally Q-linearly convergent.
We illustrate the proposed method on total variation denoising
problems and on locally adaptive estimation in signal/image deconvolution and denoising with
multiresolution statistical constraints.
\end{abstract}

{\small \noindent {\bfseries 2010 Mathematics Subject
Classification:} {Primary 49J52, 49M20, 90C26;
Secondary 15A29, 47H09, 65K05, 65K10, 94A08. 
}

\noindent {\bfseries Keywords:}
Augmented Lagrangian, primal-dual, saddle point, pointwise quadratic supportability, 
statistical multiscale analysis, linear convergence 
}

\section{Introduction}

We revisit the primal-dual first-order Proximal Alternating Predictor-Corrector (PAPC) splitting scheme 
introduced in \cite{drori2015simple} for 
solving a class of structured convex-concave saddle point problems involving a smooth
function, a sum of finitely many bilinear terms and finitely many nonsmooth functions. 
This model covers a wide array of applications in 
signal/image processing and machine learning, see for instance \cite{sra2011optimization,Schertzer15} 
and references therein.  

This paper makes three contributions, two theoretical and one practical.  
In the first of the theoretical aspects, we introduce in Section \ref{s:basics}
the notion of pointwise quadratic supportability, Definition 
\ref{d:psc},  that
is weaker than the typical assumption of strong convexity and allows one to treat common functions that,
while not strongly convex, nevertheless possess desirable properties from a variational point of view.  
Pointwise quadratic supportability can be shown to be implied by pointwise strong monotonicity of the 
gradients (or more generally subdifferentials), which reveals a fundamental connection between this 
property and functions whose subdifferentials are {\em submonotone} as defined and studied in 
in \cite{LukNguTam16}.  A more detailed study of these connections, however, is not our focus here. 

We are rather more narrowly focused on the second and third contributions of this paper, namely theoretical 
guarantees of global linear convergence of the PAPC algorithm studied in Section \ref{s:convergence}
and efficient implementations detailed in Section \ref{s:numerics} for 
the problem of simultaneous denoising and deconvolution with multiresolution statistical constraints.
Theorem \ref{q-rate} establishes global Q-linear convergence of the primal-dual iterates 
of the PAPC algorithm for convex problems with pointwise quadratically supportable objective functions
and full rank linear mappings.  A remarkable corollary of this result, Corollary \ref{t:uniqueness}, 
is that saddle point problems with this structure have unique solutions. 
In Corollary \ref{r-corollary}, we show global R-linear convergence of the primal sequence under the 
same assumptions.  A sublinear iteration complexity in terms of the saddle point gap function values 
was established in \cite{drori2015simple} for the ergodic sequence.  In our development, 
we show that global linear convergence of the {\em iterates} can be guaranteed under 
the assumption of pointwise quadratic supportability and full rank linear mappings.
Based on these results, one can  use stopping criteria that monitor only successive steps and provide 
an a posteriori error estimate on the distance to the set of exact solutions to the underlying problem.  

We illustrate these results in Section \ref{s:numerics} 
with two main applications, one conventional 
and one that is new research.  The conventional example is of smoothed total variation (TV) denoising.
The main motivation for us, however, is statistical multiscale image
denoising/deconvolution following \cite{frick2012statistical,frick2013statistical} for fluorescence
microscopic images (see also \cite{aspelmeier2015modern} for a review of fluorescence microscopy
techniques and statistical methods for them).  
The PAPC method is a full splitting approach where the gradient and the linear operators
involved are called explicitly without inversion, while the simple nonsmooth functions are evaluated 
individually via their proximity operators.   This allows for efficient implementations.  
The only other implementation we are aware of with similar convergence guarantees was reported in 
\cite{aspelmeier2016local}.  The numerical approach studied here can achieve results on the order of
minutes on a small ($32\times 32$) image or less than an hour on a full $1024\times 1024$ image. 
The  approach presented in \cite{aspelmeier2016local} required days for a $32\times 32$ image and 
processing on the full-scale images was not feasible.  
As with \cite{aspelmeier2016local}, the convergence analysis presented here allows one to 
establish error bounds on solutions to the model problem which, in turn provide
statistical guarantees on the numerical reconstruction \cite{frick2012statistical}.  
This is demonstrated for image denoising/deconvolution of stimulated emission depletion 
(STED) images \cite{hell1994breaking,klar2000fluorescence}.  An additional advantage of 
the PAPC algorithm and the analysis presented here is that a wider variety of regularizing 
objective functions can be incorporated easily.  

Some well-known primal-dual decomposition methods  are the Chambolle-Pock algorithm
\cite{chambolle2010primaldual} and Alternating Direction of Multipliers (ADM).
The method proposed by Chambolle-Pock -- shown in \cite{shefi2014rate} 
to be a equivalent to ADM with a weighted norm --
requires the computation of proximal steps in each of the primal 
and the dual spaces.  
The algorithm has been shown to be linearly convergent 
when both functions are uniformly convex, but this assumption is far too stringent.  
Recently in \cite{deng2016global}, global linear convergence was shown
under the assumptions of strict convexity and Lipschitz gradient on one of the two functions along
with certain rank assumptions on the linear mapping. However, ADM-like methods do not easily extend 
to problems involving the sum of finitely many composite nonsmooth terms, see e.g., \cite{chen2016direct} 
where even the convergence becomes an issue. 
In \cite{aspelmeier2016local} it was shown that the dual sequence generated by ADM -- or equivalently
the primal sequence of the Douglas-Rachford algorithm -- must eventually achieve a linear rate of convergence 
from any starting point for piecewise linear-quadratic convex augmented Lagrangians whenever the saddle 
points are isolated.

\subsection{Notation.}
Our setting is the real vector space $\Rn$ with the norm generated from the inner product.
The closed unit ball centered on the point $y\in\Rn$ is denoted by $\mathbb{B}(y)$.
The domain of an extended real-valued function $\varphi:\Rn\to \R\cup\{+\infty\}$ is $\dom(\varphi)\equiv \{z \in\Rn :
\varphi(z)<+\infty\}$.
The Fenchel conjugate of $\varphi$ is denoted by $\varphi^*$ and is defined by
$\varphi^*(u)=\sup_{z\in\Rn}\{\bform{z,u}-\varphi(z)\}$.  
%The proximity
%operator of $\varphi$ is defined by $\prox_\sigma^\varphi(s)=\argmin\{ \varphi (z)+ \sfrac{2\sigma}\normsq{z-s} :
%z\in\Rn\}$.
The set of symmetric 
$n\times n$ positive (semi)-definite matrices is denoted by $\mathbb{S}^n_{++}$
($\mathbb{S}^n_+$). We use $M \succ 0$ $(M \succeq 0)$ to denote a positive (semi)definite
matrix.
For any $z \in \Rn$ and any $M \in \mathbb{S}^n_+$, we denote the semi-norm $\|z\|^2_M :=
\bform{z,Mz}$.
The operator norm is defined by $\|M\| = \max_{u\in \Rn} \{ \|Mu\| : \|u\|=1\}$ and
coincides with the spectral radius of $M$ whenever $M$ is symmetric.
If $A \neq 0$, $\sigma_{min} (A)$ denotes its smallest nonzero
singular value. Let $\{\zk\}_{k\in\N}$ be a sequence that converges to $z^*$. We say the convergence is
$Q$-linear if there exists $c \in (0, 1)$ such that
$\frac{\norm{\zkn-z^*}}{\norm{\zk-z^*}} \leq c\;$ for all $k$;  convergence is
$R$-linear if there exists a sequence $\etak$ such that $\|\zk-z^*\|\leq \etak$ and $\etak \rarr 0$\; $Q$-linearly
\cite[Chapter 9]{OrtegaRheinboldt70}.

We limit our discussion to proper (nowhere equal to $-\infty$ and finite at some point), 
lower semi-continuous (lsc), extended-valued (can take the value $+\infty$) functions.  
We will, in fact, limit our discussion to {\em convex} functions, but for our introduction of 
{\em pointwise quadratic supportability} in Definition \ref{d:psc} we formulate
this with as much generality as possible to emphasize that convexity is not central to this key feature.  
By the {\em subdifferential} of a function $\varphi$, denoted $\partial \varphi$, we mean the collection of all 
{\em subgradients} that can be written as limits of sequences of  {\em Fr\'echet subgradients} at nearby points; 
a vector $v$ is  a {\em (Fr\'echet) subgradient} of $\varphi$ at $y$, written $v\in\widehat{\partial} \varphi(y)$,
if
\begin{equation}\label{e:rsd}
 \liminf_{x\to y,~x\neq y}\frac{\varphi(x)- \varphi(y) - \langle{v}, {x-y}\rangle}{\|x-y\|}\geq 0.
\end{equation}
The functions of interest for us are {\em subdifferentially regular} on their domains, that is,  
the epigraphs of the functions are {\em Clarke regular} at points where they are finite \cite[Definition 7.25]{VA}.  
For our purposes it suffices to note that, for a function $\varphi$ that is subdifferentially regular at a 
point $y$, the subdifferential is nonempty and all subgradients are Fr\'echet subgradients, that is,
$\partial \varphi(y)=\widehat{\partial} \varphi(y)\neq\emptyset$.  Convex functions, in particular, are subdifferentially regular 
on their domains and the subdifferential has the particularly simple representation as the set of 
all vectors $v$ where 
\begin{equation}\label{e:csd}
\varphi(x)- \varphi(y) - \langle{v}, {x-y}\rangle \geq 0\quad\forall x.
\end{equation}  
For $\varphi :\Rn \rarr(-\infty,\infty]$ a proper, lsc and convex function and 
for any $u \in \Rn$ and $M \in \mathbb{S}^n_{++}$,
the proximal map associated with $\varphi$ with respect to the weighted Euclidean norm is uniquely defined by:
$$ \prox_M^\varphi(u) = \argmin_z \{ \varphi(z) + \sfrac{2}\normsq{z-u}_M : z\in\Rn \}. $$
When $M = \inv{c}I_n, c > 0$, we simply use the notation $\prox_c^\varphi(u)$.
We also recall the fundamental Moreau proximal identity \cite{moreau1965proximite}, that is, for any
$z \in\Rn$
\begin{equation}\label{e:Moreau}
 z = \prox_{M}^\varphi (z) + M \prox_{\inv{M}}^{\varphi^*}(\inv{M}(z)),
\end{equation}
where $\inv{M}$ is the inverse of $M\in \mathbb{S}^n_{++}$.

%\newpage
\section{The saddle point model and the algorithm} 
\label{s:basics}
This note focuses on the following \emph{primal} problem:
\begin{equation*}\tag{\Pcal}\label{ch:main}
p_* = \min_x \left\{ p(x):= f(x) + \sumi[p]g_i(A_i^T x) : x\in\Rn \right\}.
\end{equation*}

The following blanket assumptions on the problem's data hold throughout:
\begin{center}
\fbox{%
        \addtolength{\linewidth}{\fboxsep}%
        \addtolength{\linewidth}{\fboxrule}%
        \begin{minipage}{\linewidth}%
        \begin{assumption}\label{hyp:A1}$~$
\begin{enumerate}[(i)]
   \item\label{hyp:A1i}
   The function $f:\Rn\rightarrow \R$  
  is convex and  continuously
  differentiable with Lipschitz continuous gradient $\nabla f$ (constant $L_f$), that is for
  all $x, x'\in\Rn$, we have
  \begin{equation}\label{Lf}
  \norm{\nabla f(x) - \nabla f(x')}\leq L_f \norm{x-x'}.
  \end{equation}
  \item\label{hyp:A1ii} $g_i: \Rmi\rarr(-\infty, +\infty], i=1, \ldots, p$ is proper, lsc, and convex.
  \item\label{hyp:A1iii} The linear mappings $A_i : \Rmi \rarr \Rn$, $i=1, \ldots, p$ are full rank,
  that is, $\sigma^2_{min}(A_i) = \lambda_{min}(A_i^TA_i) > 0$.
    \item\label{hyp:A1iv} The set of optimal solutions for problem \eqref{ch:main}, denoted $X^*$, is 
    nonempty. 
  \end{enumerate}
\end{assumption}
 \end{minipage}%
}
\end{center}
\bigskip

Assumption \eqref{hyp:A1ii} implies that the function defined by $g(y) := \sumi[p] g_i(y_i)$
with $y = (y_1, \ldots, y_p) \in \Rm$ for $m =\sumi[p]m_i$ is proper, lsc and convex.
Assumption \eqref{hyp:A1iii} implies that the linear map  $\A :\Rm \rarr \Rn$ 
  given by $\A y = \sumi[p]A_i y_i$ is full rank.  Assumption \eqref{hyp:A1iv} 
  implies that the optimal value of the primal problem is finite. 
  
The assumption of Lipschitz continuous gradients \eqref{hyp:A1i}, while standard in the literature involving 
complexity bounds, is very strong indeed.  A more satisfying theory would involve only functions 
with Lipschitz continuous gradients on 
bounded domains.  But this would involve a reinvention of much of the 
theory.  The main issue is boundedness of the iterates, which is not a precursor to the 
convergence results of \cite{drori2015simple} upon which we build.   
Four our purposes, Lipschitz continuity suffices, and leaves a short path 
to the main result.  We leave the stronger results as an open challenge. 

We reformulate the problem \eqref{ch:main} as a convex-concave saddle point
problem, and then apply the primal-dual algorithm in \cite{drori2015simple} to find
a saddle point solution. 
The \emph{primal-dual} problem associated to \eqref{ch:main} consists of finding a saddle point of the
Lagrangian:
\begin{equation*}\tag{\Mcal}\label{p:sp}
\min_{x\in\Rn} \max_{y\in \Rm}\left\{K(x,y) := f(x) + \bform{x,\A y} - g^*(y) \right\}.
\end{equation*}

Assumption \ref{hyp:A1}\eqref{hyp:A1iv}  guarantees that the convex-concave function $K(\cdot,\cdot)$
has a saddle point, that is, there exists $(\hx,\hy) \in \Rn \times \Rm$ such that 
$$K(\hx, y) \leq K(\hx, \hy) \leq K(x, \hy) \qquad \forall x\in\Rn , y \in \Rm. $$
The existence of a saddle point corresponds to zero duality gap for the induced optimization
problems
\[
 p(x) = \sup_{y} \{K (x, y) : y\in\Rm \} \qquad q (y) = \inf_{x} \{ K (x, y) : x\in\Rn  \}.
\]
By weak duality, we have $\inf_{x\in\Rn} p (x) \geq \sup_{y\in \Rm} q (y)$.
In addition, under standard constraint qualifications
(e.g., \cite[Chapter 5]{aus-teb-book} or \cite[Theorem 2.3.4]{CUP}), $(\hx,\hy)$ is a saddle point of $K$ 
if and only if $\hx$ is an optimal solution of the
primal problem \eqref{ch:main}, $\hy$ is an optimal solution of the dual problem to \eqref{ch:main}.

Constraints are included in the model through the extended-valued functions $g_i$, which 
need not be smooth.  The properties of the function $f$ are crucial to the success of the 
algorithm.  In addition to Assumption \ref{hyp:A1} we will assume that $f$ is 
{\em pointwise quadratically supportable}. 

\begin{definition}[pointwise quadratically supportable mappings]\label{d:psc} $~$ 
\begin{enumerate}[(i)]
 \item\label{d:pscoer} A proper, extended-valued function  $\varphi : \Rn \to  \R\cup \{+\infty\}$ 
is said to be {\em pointwise quadratically supportable at $y$} if it is subdifferentially regular there and 
there exists a  neighborhood $V$ of $y$ and a constant $\mu>0$ such that   
 \begin{equation}\label{e:psc}
 (\forall v\in\partial \varphi(y))\quad \varphi(x)\geq \varphi(y) + \bform{v, x-y}+ \frac{\mu}{2} \normsq{x-y}, \quad
 \forall  x\in V.
\end{equation}
If for each bounded neighborhood $V$ of $y$ there exists a constant $\mu>0$ such that 
\eqref{e:psc} holds, then the function $\varphi$ is said to be {\em
pointwise quadratically supportable at $y$ on bounded sets}.
If \eqref{e:psc} holds with one and the same constant $\mu>0$ on all neighborhoods $V$, then $\varphi$
 is said to be {\em uniformly} pointwise quadratically supportable
at $y$.  
\item\label{d:scoer} 
A proper, extended-valued function  $\varphi : \Rn \to  \R\cup \{+\infty\}$ 
is said to be {\em strongly coercive at $y$} if it is subdifferentially regular on $V$ and 
there exists a  neighborhood $V$ of $y$ and a constant $\mu>0$ such that   
 \begin{equation}\label{e:scoer}
 (\forall v\in\partial \varphi(z))\quad\varphi(x)\geq \varphi(z) + \bform{v, x-z}+ \frac{\mu}{2} \norm{x-z}^2, \quad
 \forall  x, z \in V.
\end{equation}
If for each bounded neighborhood $V$ of $y$ there exists a constant $\mu>0$ such that 
\eqref{e:scoer} holds, then the function $\varphi$ is said to be {\em
strongly coercive at $y$ on bounded sets}.
If \eqref{e:scoer} holds with one and the same constant $\mu>0$ on all neighborhoods $V$, then $\varphi$
 is said to be {\em uniformly} strongly coercive at $y$.  
\end{enumerate}
\end{definition}

Clearly strong coercivity implies pointwise quadratic supportability, but the reverse implication 
does not hold, as the next example shows.  
\begin{example} The Huber function (see \eqref{e:Huber} in Section \ref{s:numerics}) common in
robust regression is an example of a smooth convex function that does not satisfy \eqref{e:psc} at all points 
(namely where it is linear), but does satisfy this inequality at its minimum.  
A slight modification of the Huber function is 
\begin{equation}\label{e:Huberl}
\qquad  \phi_{\alpha}(t) =
\begin{cases} \frac{ (t+\epsilon)^2 -\epsilon^2}{2 \alpha} & \text{if $0\leq t \leq \alpha-\epsilon$} \\
                       \frac{ (t-\epsilon)^2-\epsilon^2 }{2 \alpha} & \text{if $-\alpha+\epsilon\leq t \leq 0$} \\
                       |t| +\left(\epsilon -\frac{\epsilon^2+\alpha^2}{2\alpha}\right) & \text{if $|t| > \alpha-\epsilon$}.
\end{cases}  
\end{equation}
This function is convex and pointwise quadratically supportable on 
bounded sets at $t=0$ where $\partial \phi_{\alpha}(0)=[-\epsilon/\alpha, \epsilon/\alpha]$.  This 
function is not, however, strongly convex due to the linear portion for $|t| > \alpha-\epsilon$. 
 \end{example}

 For convex functions, pointwise quadratic supportability is {\em equivalent} to pointwise quadratic supportability 
on bounded sets, as the next proposition establishes. 
\begin{proposition}\label{t:plc2pcbs} Let $\varphi : \Rn \to  \R\cup \{+\infty\}$ be proper
extended-valued and convex on $\Rn$.
The following are equivalent. 
\begin{enumerate}[(i)]
   \item $\varphi$ is pointwise quadratically supportable at $y$;
\item $\varphi$ is pointwise quadratically supportable at $y$ on bounded sets.
\end{enumerate}
 \end{proposition}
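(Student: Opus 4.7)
The implication from pointwise quadratic supportability on bounded sets to pointwise quadratic supportability at $y$ is immediate, since one merely instantiates the bounded-set version with any one bounded neighborhood. The content is therefore the forward direction: extending a quadratic lower bound that is only known to hold on some (possibly tiny) neighborhood $V_0 = B(y,r)$ with constant $\mu>0$ to a quadratic lower bound on an arbitrary bounded neighborhood $V \subseteq B(y,R)$ with some new constant $\mu' > 0$.

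My plan is to use convexity to "lift" the known quadratic estimate from $\partial B(y,r)$ out to the larger ball. Fix $v\in\partial\varphi(y)$ and take any $x\in B(y,R)\setminus B(y,r)$. Let $t = r/\|x-y\| \in (0,1)$ and set $z = y + t(x-y)$, so that $\|z-y\| = r$ and hence $z\in V_0$. Convexity of $\varphi$ gives $\varphi(z)\le(1-t)\varphi(y)+t\varphi(x)$, which rearranges to
\begin{equation*}
  \varphi(x) - \varphi(y) \;\ge\; \frac{1}{t}\bigl(\varphi(z)-\varphi(y)\bigr).
\end{equation*}
Applying the assumed estimate at $z \in V_0$, namely $\varphi(z)-\varphi(y) \ge \bform{v,z-y} + \frac{\mu}{2}\|z-y\|^2 = t\bform{v,x-y} + \frac{\mu}{2}t^{2}\|x-y\|^2$, and dividing by $t$ yields
\begin{equation*}
  \varphi(x)-\varphi(y)-\bform{v,x-y} \;\ge\; \frac{\mu}{2}\, t\,\|x-y\|^{2} \;=\; \frac{\mu r}{2}\,\|x-y\|.
\end{equation*}
This is only a linear lower bound on the excess, which initially looks too weak; the trick is to exploit boundedness to convert it into a quadratic one. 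Since $\|x-y\|\le R$ on $V$, we have $\|x-y\|^{2}\le R\,\|x-y\|$, hence $\frac{\mu r}{2}\|x-y\|\ge \frac{\mu r}{2R}\|x-y\|^{2}$.

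Combining with the original estimate inside $B(y,r)$ and taking $\mu' := \min\{\mu,\;\mu r/R\} = \mu r/R$, we obtain $\varphi(x) \ge \varphi(y) + \bform{v,x-y} + \frac{\mu'}{2}\|x-y\|^{2}$ uniformly over $V\subseteq B(y,R)$, and this works for every $v\in\partial\varphi(y)$. Since the bounded neighborhood $V$ was arbitrary, this gives pointwise quadratic supportability on bounded sets.

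The only mildly delicate step is the conversion from the linear lower bound to the quadratic one; the point to watch is that the constant degrades like $1/R$, which is perfectly fine for the definition (which allows $\mu$ to depend on the bounded neighborhood) but which explains why one cannot hope to upgrade further to \emph{uniform} pointwise quadratic supportability from local information alone. All other steps are routine manipulations with the convex inequality and the subgradient inequality $\varphi(z)\ge \varphi(y)+\bform{v,z-y}$ (which, together with $z\in V_0$, is where convexity is used essentially).
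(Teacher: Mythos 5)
Your proof is correct and follows essentially the same route as the paper's: join a point of the large ball to $y$ by a segment, locate the intermediate point on the small sphere where the local quadratic estimate applies, and push the estimate outward by convexity. Your bookkeeping is slightly leaner (you use only the secant inequality together with the quadratic bound at the intermediate point, whereas the paper also folds in the subgradient inequality at the far point before solving for $\varphi(z)$), which incidentally yields the marginally better constant $\mu r/R$ in place of the paper's $\mu\delta^2/(4R^2)$, but the underlying idea is identical.
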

\begin{proof}
That pointwise quadratic supportability at $y$ on bounded sets implies pointwise quadratic supportability
at $y$ is clear.  For the converse implication, 
fix $R>0$ and choose any  $z\in R\mathbb{B}(y)$.  
Let $V$ be the neighborhood of $y$ on which \eqref{e:psc} holds with constant $\mu$.
If $z\in V$, then inequality \eqref{e:psc} is trivially satisfied.  
Suppose, then, that $z\notin V$. 
Since $V$ is a neighborhood of $y$, there exists a $\delta>0$ and a point $x\in \delta
\mathbb{B}(y)\subset V$ such that $x=\tau z + (1-\tau)y$ for $\tau\in (0,1)$ and $\|x-y\|=\delta$.  
Then 
\[
 z-y = \frac{1}{1-\tau}(z-x) = \frac{1}{\tau}(x-y)
\]
and 
\[
 \tau = \frac{\delta}{\|z-y\|}\geq \frac{\delta}{R}.
\]
By the characterization of the convex subdifferential we have
\[
   \varphi(z)\geq \varphi(y)+ \bform{v, z-y} ~\forall ~v\in\partial \varphi(y).
\]
Adding $\varphi(x)-\varphi(x)$ to this and using \eqref{e:psc} yields
\begin{eqnarray*}
(\forall ~v\in\partial \varphi(y))\qquad \varphi(z)&\geq& \varphi(y)+ \left(\varphi(y)+\bform{v, x-y}+\frac{\mu}{2}\|x-y\|^2\right)-\varphi(x)+ \bform{v, z-y}\\
&=& 2\varphi(y)+(1+\tau)\bform{v, z-y}+\frac{\mu \tau^2}{2}\|z-y\|^2-\varphi(x)\\
&\geq& 2\varphi(y)+(1+\tau)\bform{v, z-y}+\frac{\mu \tau^2}{2}\|z-y\|^2-\left(\tau \varphi(z)+(1-\tau)\varphi(y)\right)\\
&=& (1+\tau)\varphi(y)-\tau \varphi(z) + (1+\tau)\bform{v, z-y}+\frac{\mu \tau^2}{2}\|z-y\|^2.
\end{eqnarray*}
Rearranging the terms and simplifying yields 
\[
(\forall ~v\in\partial \varphi(y))\qquad   \varphi(z)\geq \varphi(y) + \bform{v, z-y}+\frac{\mu \tau^2}{2(1+\tau)}\|z-y\|^2\geq 
\varphi(y) + \bform{v, z-y}+\frac{\mu \delta^2}{4R^2}\|z-y\|^2.
\]
Since $R$ and $z$ are arbitrary, this completes the proof. 
\end{proof}

%Continuously differentiable convex functions are commonly used.  
It is worthwhile contrasting the above property to the assumption of {\em strong convexity} 
that is common in the literature.  Analogous to Definition \ref{d:psc}\eqref{d:pscoer}, we 
generalize this notion to {\em pointwise strong convexity}, which is new. 
\begin{definition}[(pointwise) strongly convex functions]\label{d:(p)sc}  $~$
\begin{enumerate}[(i)]
\item\label{d:pscvx} A function  $\varphi : \Rn \to  \R\cup \{+\infty\}$ 
is said to be {\em pointwise strongly convex at $y$} if there exists a convex neighborhood $V$ of 
$y$ and a constant $\mu>0$
such that,  
 \begin{equation}\label{e:pscvx}
(\forall \tau\in (0,1))\quad \varphi\left(\tau x+(1-\tau)y\right)\leq \tau \varphi(x)+(1-\tau) \varphi(y)-\frac12\mu \tau(1-\tau)\|x-y\|^2,  \quad \forall  x\in V. 
\end{equation} 
If for each bounded convex neighborhood $V$ there exists a constant $\mu>0$ such that \eqref{e:pscvx} 
holds, then $\varphi$ is said to be {\em pointwise strongly convex at $y$ on bounded sets}.  
If there exists a single constant $\mu>0$ such that \eqref{e:pscvx} 
holds on all convex neighborhoods $V$, then $\varphi$ is said to be {\em uniformly} pointwise strongly convex
at $y$.  
\item\label{d:scvx} A function  $\varphi : \Rn \to  \R\cup \{+\infty\}$ 
is said to be {\em strongly convex at $y$} if there exists a convex neighborhood $V$ of 
$y$ and a constant $\mu>0$
such that,  
 \begin{equation}\label{e:scvx}
(\forall \tau\in (0,1))\quad \varphi\left(\tau x+(1-\tau)z\right)\leq \tau \varphi(x)+(1-\tau) \varphi(z)-\frac12\mu \tau(1-\tau)\|x-z\|^2,~ \forall  x,z\in V. 
\end{equation} 
If for each bounded convex neighborhood $V$ there exists a constant $\mu>0$ such that \eqref{e:scvx} 
holds, then $\varphi$ is said to be {\em strongly convex at $y$ on bounded sets}.  
If there exists a single constant $\mu>0$ such that \eqref{e:scvx} 
holds on all convex neighborhoods $V$, then $\varphi$ is said to be {\em uniformly} 
strongly convex
at $y$.  
\end{enumerate}
\end{definition}
Again, it is clear from the definition that strong convexity implies pointwise 
strong convexity, but the converse need not, in general, be true.  Indeed, 
it is well known in the smooth case ($\varphi$ continuously differentiable)
that strong convexity defined by \eqref{e:scvx} is {\em equivalent}
to strong coercivity defined by \eqref{e:scoer}, with the same constants on 
the same neighborhoods.  The situation is very different for the 
pointwise definition, even for smooth functions as the next example demonstrates.
\begin{example}[pointwise quadratic supportability does not imply convexity]
 The function $\varphi(y)\equiv 1-e^{-y^2}$ is 
 pointwise quadratically supportable on bounded sets at $y=0$, but it is neither
convex nor pointwise strongly convex on bounded sets.  For that matter, it is not even coercive. 
\end{example}

Another notion common in the literature is strong monotonicity.  Again, for 
a smooth convex function, strong convexity is equivalent to strong monotonicity 
of its gradient.  The next result shows that pointwise strong convexity and pointwise 
strong monotonicity for differentiable functions imply pointwise quadratic supportability.  Together with 
the above example, this shows that pointwise strong 
coercivity is the weakest of the three notions.  
\begin{proposition}[pointwise quadratically supportable/convex/monotone differentiable functions]\label{t:psc smooth}
Let $\varphi : \Rn \rightarrow \R\cup\{+\infty\}$ be continuously differentiable at $y$.  
\begin{enumerate}[(i)] 
 \item\label{t:psc smooth i} If $\varphi$ is pointwise strongly convex at $y$ with constant $\mu$ on 
the convex neighborhood $V$ (that is, $\varphi$ satisfies \eqref{e:pscvx}), then $\varphi$ is pointwise quadratically supportable at 
$y$ with constant $\mu$ on $V$ (that is, $\varphi$ satisfies 
\eqref{e:psc}).
 \item\label{t:psc smooth iii} If there exists a constant $\mu$ and a convex neighborhood $V$ of $y$ such that 
 \begin{equation}\label{e:psm smooth}
\bform{\nabla \varphi(x)-\nabla \varphi(y), x-y}\geq \mu\|x-y\|^2 \quad \forall  x\in V,
\end{equation} 
then $\varphi$ satisfies \eqref{e:psc} with the same constant 
on some neighborhood $V'\subset V$.  
\end{enumerate}
\end{proposition}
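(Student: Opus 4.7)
The plan is to prove the two parts separately, with part (i) following from a limit argument applied to the definition of pointwise strong convexity, and part (ii) following from the fundamental theorem of calculus combined with a rescaling that exploits the ``anchored-at-$y$'' nature of \eqref{e:psm smooth}.

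\textbf{Part (i).} First I would take any $x\in V$ and any $\tau\in(0,1)$, and use the pointwise strong convexity inequality \eqref{e:pscvx} (anchored at $y$) to write
\[
\varphi(y+\tau(x-y)) - \varphi(y) \;\leq\; \tau\bigl(\varphi(x)-\varphi(y)\bigr) - \tfrac{1}{2}\mu\,\tau(1-\tau)\|x-y\|^2.
\]
Dividing by $\tau>0$ and letting $\tau\to 0^+$, the left-hand side tends to $\langle\nabla\varphi(y),x-y\rangle$ by differentiability of $\varphi$ at $y$, while the right-hand side tends to $\varphi(x)-\varphi(y) - \tfrac{\mu}{2}\|x-y\|^2$. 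Rearranging yields \eqref{e:psc} on $V$ with the same constant $\mu$, using that $\partial\varphi(y)=\{\nabla\varphi(y)\}$ because $\varphi$ is $C^1$ at $y$.

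\textbf{Part (ii).} Let $U$ be a neighborhood of $y$ on which $\varphi$ is continuously differentiable, and set $V':=V\cap U$, which is still a neighborhood of $y$; by shrinking if necessary, I may take $V'$ convex. Fix $x\in V'$ and parametrize the segment by $z_t:=y+t(x-y)$ for $t\in[0,1]$; convexity of $V'$ gives $z_t\in V'\subset V$. By the fundamental theorem of calculus,
\[
\varphi(x)-\varphi(y)-\langle\nabla\varphi(y),x-y\rangle \;=\; \int_0^1 \bigl\langle\nabla\varphi(z_t)-\nabla\varphi(y),\,x-y\bigr\rangle\,dt.
\]
Now comes the key step: for each $t\in(0,1]$, I use $x-y = \tfrac{1}{t}(z_t-y)$ to rewrite the integrand as $\tfrac{1}{t}\langle\nabla\varphi(z_t)-\nabla\varphi(y),\,z_t-y\rangle$, and then apply the pointwise strong monotonicity assumption \eqref{e:psm smooth} at the point $z_t\in V$ (which is the anchored point ``$x$'' in that inequality). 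This yields
\[
\bigl\langle\nabla\varphi(z_t)-\nabla\varphi(y),\,x-y\bigr\rangle \;\geq\; \frac{\mu}{t}\|z_t-y\|^2 \;=\; \mu\,t\,\|x-y\|^2.
\]
Integrating from $0$ to $1$ gives $\int_0^1 \mu t\,\|x-y\|^2\,dt = \tfrac{\mu}{2}\|x-y\|^2$, which is exactly \eqref{e:psc} at $y$ with constant $\mu$ on $V'$.

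\textbf{Main obstacle.} There is essentially no deep obstacle; the only point requiring care is the rescaling trick in (ii). The monotonicity hypothesis \eqref{e:psm smooth} is \emph{not} a full monotonicity inequality between two arbitrary points---it is anchored at $y$---so one cannot apply it directly to a pair $(z_t,y)$ without first observing that $z_t$ itself lies in $V$ and plays the role of ``$x$'' in \eqref{e:psm smooth}. Once that is recognized, the factor $1/t$ and the factor $t^2$ from $\|z_t-y\|^2=t^2\|x-y\|^2$ combine to give precisely the $\mu t$ integrand that produces the clean $\mu/2$ constant after integration.
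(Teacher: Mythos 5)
Your proof is correct. Part (i) is essentially identical to the paper's argument (rearrange \eqref{e:pscvx}, divide by $\tau$, let $\tau\to 0^+$), so there is nothing to compare there. Part (ii), however, takes a genuinely different route: you integrate $\langle\nabla\varphi(z_t)-\nabla\varphi(y),x-y\rangle$ along the segment and apply the anchored monotonicity \eqref{e:psm smooth} at each $z_t$, so the factors $1/t$ and $t^2$ combine to give the integrand $\mu t\|x-y\|^2$ and hence the exact constant $\mu/2$ on all of $V$ (intersected with the set where $\nabla\varphi$ exists and is integrable along segments). The paper instead invokes the Mean Value Theorem to get $\varphi(x)-\varphi(y)=\langle\nabla\varphi(z_\lambda),x-y\rangle$ for a single $\lambda\in(0,1)$, obtains the lower bound $\lambda\mu\|x-y\|^2$, and then argues that $\lambda>1/2$ for $x$ near $y$, which is why its conclusion only holds on a possibly smaller neighborhood $V'\subset V$. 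Your version is arguably cleaner and stronger: it avoids the shrinkage of the neighborhood and, more importantly, it sidesteps the paper's claim that ``$\lambda\to 1$ as $x\to y$,'' which is not justified by continuity of $\nabla\varphi$ alone (a second-order Taylor expansion suggests the mean-value parameter tends to $1/2$ for generic smooth $\varphi$, so the paper's $\lambda>1/2$ step is delicate). The only price you pay is needing the fundamental theorem of calculus for $t\mapsto\varphi(z_t)$, i.e.\ integrability of the derivative along the segment, which you correctly address by restricting to a neighborhood where $\varphi$ is continuously differentiable; the MVT route needs only differentiability on the segment.
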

\begin{proof}  
To see \eqref{t:psc smooth i}, rearrange the 
inequality \eqref{e:pscvx}, divide through by $\tau$ and take the limit as $\tau\to 0$.  

To see \eqref{t:psc smooth iii}  choose any $x\in V$. 
By the Mean Value Theorem  there is a $\lambda\in (0,1)$ such that, for $z=\lambda x+ (1-\lambda)y$ (which is in $V$
since this is convex),
\begin{eqnarray*}
   \varphi(x)-\varphi(y) &=& \bform{\nabla \varphi(z), x-y}\\
  &=& \bform{\nabla \varphi(y), x-y}+\bform{\nabla \varphi(z)-\nabla \varphi(y), x-y}\\
  &=& \bform{\nabla \varphi(y), x-y}+\frac{1}{\lambda}\bform{\nabla \varphi(z)-\nabla \varphi(y), z-y}\\
&\geq& \bform{\nabla \varphi(y), x-y}+\frac{\mu}{\lambda}\|z-y\|^2\\
&\geq& \bform{\nabla \varphi(y), x-y}+\lambda\mu\|x-y\|^2.
\end{eqnarray*}
By continuity of $\nabla \varphi$, $\lambda\to 1$ as $x\to y$, hence,
for all $x$ close enough to $y$, that is in some neighborhood $V'\subset V$,
$\lambda>1/2$  and 
\[
   \varphi(x) \geq \varphi(y)+ \bform{\nabla \varphi(y), x-y}+\frac{\mu}{2}\|x-y\|^2
\]
as claimed.  
\end{proof}

We leave a complete development of pointwise coercivity and related objects to future research.  
For our concrete application, namely linear image denoising and deconvolution, we will assume 
the following throughout.
\begin{center}
\fbox{%
        \addtolength{\linewidth}{0\fboxsep}%
        \addtolength{\linewidth}{0\fboxrule}%
        \begin{minipage}{\linewidth}%
        \begin{assumption}\label{hyp:B}$~$
The function $f : \Rn \rarr \R$ of Problem \ref{ch:main} is 
pointwise quadratically supportable at each $\hx\in X^*$, that is,  
there exists a $\mu>0$ 
and a convex neighborhood $V$ of $\hx$ such that 
 \begin{equation}\label{muf}
f(x)\geq f(\hx)+\left\langle \nabla f(\hx), x-\hx\right\rangle + \tfrac12\mu\|x-\hx\|^2 \quad \forall  x\in V. 
\end{equation} 
\end{assumption}
 \end{minipage}%
}
\end{center}

Note that condition \eqref{t:psc smooth iii} of Proposition \ref{t:psc smooth} puts a lower bound 
on the constant of pointwise Lipschitz continuity for $\nabla f$, namely $\mu$.  
Also, as a consequence of Proposition \ref{t:plc2pcbs} any convex function
satisfying  Assumption \ref{hyp:B} is pointwise quadratically supportable  at all points in the solution set $X^*$ 
on all bounded convex neighborhoods of these points.  This property, together with Assumption
\ref{hyp:A1} will yield global linear convergence for our proposed algorithm and, as a corollary, 
uniqueness of saddlepoints.  
  
\subsection{The Algorithm} 
The algorithm we revisit is the primal-dual (PAPC) algorithm proposed in
\cite{drori2015simple} for solving \eqref{p:sp}. It consists of a predictor-corrector gradient step for handling the smooth
part of $K$ and a proximal step for handling the nonsmooth part. 
\bigskip

\nr \fbox{\parbox{6.5in}
{
{\bf Proximal Alternating Predictor-Corrector (PAPC) for solving \eqref{p:sp}}\\
{\bf Initialization:} Let $(x^0, y^0)\in\Rn\times\Rm$, and choose the parameters $\tau$ and 
$\sigma$ to satisfy 
\begin{subequations}\label{papc}
\begin{equation}\label{papc:parameters}
\tau \in \left(0, \sfrac{L_f}\right), \quad 0 <\tau\sigma \leq \tfrac{1}{\|\A^T\A\|}.
\end{equation}
%\\
%For $k=1, 2, \ldots, $ \\
\medskip
{\bf Main Iteration:} for $k=1,2,\dots$ update $\xk,\yk$ as follows:
\begin{flalign}
&\pk = \xkp - \tau (\nabla f(\xkp) + \A\ykp);\label{papc:minx1} \\
&\text{for $i=1, \ldots, p$,}\nonumber \\
&\qquad \yk_i = \argmax_{y_i\in\Rmi}\{ \bform{A_i^T\pk, y_i} - g^*_i(y_i) -
(1/2\sigma)\normsq{y_i - \ykp_i}\} \equiv \prox_{\sigma}^{g^*_i} (\ykp_i + \sigma A_i^T\pk);
\label{papc:miny} \\
&\xk = \xkp - \tau (\nabla f(\xkp) + \A\yk).\label{papc:minx2} 
\end{flalign}
\end{subequations}
}}

\bigskip

At each iteration the algorithm utilizes
one gradient and full proximal map evaluation on the given nonsmooth function, assumed to be easy to
compute.  For our purposes, we suppose these can be evaluated exactly, though with finite
precision arithmetic this is clearly not realistic.  A version of this theory that takes inexactness 
into account is left to future research.  

The step in the dual space 
\eqref{papc:miny} can be written in a short form using the
following notation. 
Since $g(y) := g(y_1, \ldots, y_p) = \sumi[p]g_i(y_i)$, then the convex conjugate of a separable sum
of functions gives $g^*(y) := \sumi[p]g^*_i(y_i)$ and the definition of the matrix $S =
\inv{\sigma}I_m$ we immediately get that for any point $\zeta_i\in \Rmi, \; i=1, \ldots, m$, 
$$ \prox_S^{g^*} (\zeta) = (\prox_{\sigma}^{g^*_1}(\zeta_1), \prox_{\sigma}^{g^*_2}(\zeta_2),
\ldots, \prox_{\sigma}^{g^*_p}(\zeta_p)). $$
Thus the step \eqref{papc:miny} can be written in vector notation by $\yk = \prox_S^g (\ykp +
\sigma\A^T\pk)$.  It is possible to use different proximal step constants $\sigma_i$, 
$i = 1\ldots,p$, see details in \cite{drori2015simple}. The choice $\sigma_i=\sigma$ for 
$i=1, \ldots, p$ is purely for simplicity of exposition.

\medskip

In \cite{drori2015simple} the parameter choices for obtaining the
iteration complexity with respect to the ergodic sequence was given and the convergence of 
the sequence $\{(x^k,y^k)\}_{k\in\N}$ to a saddle point solution was established. As we will use 
this later, we quote the main result here.

\begin{proposition}\cite[Corollary 3.2]{drori2015simple}\label{t:convergence}
Let $\{(\pk , \yk , \xk)\}_{k\in\N}$ be the sequence generated by
the PAPC algorithm.  If Assumptions \ref{hyp:A1}(i)(ii)(iv) are satisfied then
the sequence $\{(\xk , \yk) \}_{k\in\N}$
converges to a saddle point $(\bar{x}, \bar{y})$ of $K(\cdot,\cdot)$.
\end{proposition}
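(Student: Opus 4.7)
The plan is to establish Fejér monotonicity of $\{(x^k,y^k)\}$ with respect to the saddle-point set in a suitable weighted norm, deduce that successive differences vanish and the sequence is bounded, then identify every cluster point as a saddle point, and finally upgrade subsequential convergence to full convergence by an Opial-type argument.

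First I would fix an arbitrary saddle point $(\bar x,\bar y)$, for which Assumption \ref{hyp:A1}(iv) guarantees existence, and record the saddle-point optimality conditions $\nabla f(\bar x)+\A\bar y=0$ and $\A^T\bar x\in\partial g^*(\bar y)$. From the prox step \eqref{papc:miny}, optimality gives $\sigma^{-1}(\ykp-\yk)+\A^T\pk\in\partial g^*(\yk)$. Combining this with the saddle-point inclusion via monotonicity of $\partial g^*$ yields
\begin{equation*}
\tfrac{1}{2\sigma}\bigl(\|\ykp-\bar y\|^2-\|\yk-\bar y\|^2-\|\ykp-\yk\|^2\bigr)+\langle \A^T(\pk-\bar x),\yk-\bar y\rangle\;\geq\;0.
\end{equation*}
Next I would take the primal update \eqref{papc:minx2} in the form $\tau^{-1}(\xkp-\xk)=\nabla f(\xkp)+\A\yk$, take the inner product with $\xk-\bar x$, use convexity of $f$ together with the descent lemma (consequence of $L_f$-Lipschitzness of $\nabla f$) and $\nabla f(\bar x)=-\A\bar y$, to extract
\begin{equation*}
\tfrac{1}{2\tau}\bigl(\|\xkp-\bar x\|^2-\|\xk-\bar x\|^2\bigr)\;\geq\;\bigl(\tfrac{1}{2\tau}-\tfrac{L_f}{2}\bigr)\|\xkp-\xk\|^2+\langle \A(\yk-\bar y),\xk-\bar x\rangle+R_k,
\end{equation*}
with $R_k\geq 0$ the gap from convexity of $f$. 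The step-size condition $\tau<1/L_f$ ensures positivity of the leading coefficient.

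The heart of the argument is combining these two inequalities. The key structural feature of PAPC is the predictor–corrector relation
\begin{equation*}
\pk-\xk=\tau\A(\yk-\ykp),
\end{equation*}
obtained by subtracting \eqref{papc:minx1} from \eqref{papc:minx2}. Substituting this into the cross term $\langle \A^T(\pk-\bar x),\yk-\bar y\rangle$ produces an exact cancellation against $\langle \A(\yk-\bar y),\xk-\bar x\rangle$ modulo the residual $\tau\langle \A^T\A(\yk-\ykp),\yk-\bar y\rangle$. I would estimate this residual with the three-point identity in the $\A^T\A$-seminorm, so that it telescopes. Collecting everything in the block-weighted metric
\begin{equation*}
M=\begin{pmatrix}\tau^{-1}I_n & 0\\ 0 & \sigma^{-1}I_m-\tau\A^T\A\end{pmatrix},
\end{equation*}
which is positive semidefinite exactly because of \eqref{papc:parameters} ($\tau\sigma\|\A^T\A\|\leq 1$), the sum becomes a Fejér-type inequality
\begin{equation*}
\bigl\|(\xk,\yk)-(\bar x,\bar y)\bigr\|_M^2\;\leq\;\bigl\|(\xkp,\ykp)-(\bar x,\bar y)\bigr\|_M^2-\rho_k,\qquad \rho_k\geq c\bigl(\|\xk-\xkp\|^2+\|\yk-\ykp\|^2\bigr).
\end{equation*}

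From this I would conclude, in standard fashion, that $\{(\xk,\yk)\}$ is bounded, that $\sum_k\rho_k<\infty$ and hence $\|\xk-\xkp\|\to 0$ and $\|\yk-\ykp\|\to 0$. Extracting a subsequential limit $(x^\infty,y^\infty)$ and passing to the limit in \eqref{papc:minx1}--\eqref{papc:minx2} using continuity of $\nabla f$ (Assumption \ref{hyp:A1}(i)) and the graph-closedness of $\partial g^*$ (equivalently, continuity of $\prox_\sigma^{g^*}$) shows that $(x^\infty,y^\infty)$ satisfies the saddle-point inclusions, hence is itself a saddle point. Applying Fejér monotonicity with respect to this particular saddle point then yields a monotone sequence $\|(\xk,\yk)-(x^\infty,y^\infty)\|_M$, whose limit must be zero on the convergent subsequence, forcing the full sequence to converge to $(x^\infty,y^\infty)$.

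The main obstacle I anticipate is Step 3: identifying the correct weighting so that all cross terms cancel cleanly and the residual remains nonnegative. The choices $\tau<1/L_f$ and $\tau\sigma\|\A^T\A\|\leq 1$ appear precisely so that (i) the Lipschitz penalty from the descent lemma is dominated by $\|\xk-\xkp\|^2/(2\tau)$, and (ii) the operator $M$ is positive semidefinite; any weaker parameter regime would break one of these two positivity requirements and the Lyapunov argument would collapse. Once the weighted metric is right, everything else is routine; but finding it requires exploiting the predictor–corrector identity above, which is the distinguishing feature of PAPC compared with a naive forward step.
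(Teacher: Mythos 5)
The paper gives no proof of this proposition: it is imported verbatim from \cite[Corollary 3.2]{drori2015simple}, and the only ingredient of the underlying argument reproduced in the paper is the combined inequality of Lemma \ref{lemlip:1}. Your sketch reconstructs essentially that same argument: the two monotonicity/descent inequalities, the predictor--corrector identity $\pk-\xk=\tau\A(\yk-\ykp)$, and the resulting Fej\'er inequality in the weighted metric (your $M$ is exactly the $H$ of \eqref{H-def}) are precisely how the cited result is obtained, so the route is the standard one and matches the source.

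One soft spot in your write-up. Under \eqref{papc:parameters} the block $G=\sigma^{-1}I_m-\tau\A^T\A$ is only positive \emph{semi}definite (it is singular exactly when $\tau\sigma\|\A^T\A\|=1$, which the parameter rule permits), so the telescoping controls $\|\ykp-\yk\|_G^2$ rather than $\|\ykp-\yk\|^2$, and $\|\cdot\|_M$ is only a seminorm. Consequently your claims that $\rho_k\geq c\,\|\yk-\ykp\|^2$, that the Fej\'er inequality by itself bounds $\{\yk\}$, and that monotonicity of $\|(\xk,\yk)-(x^\infty,y^\infty)\|_M$ forces Euclidean convergence of the dual iterates, do not follow directly in the borderline case. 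The gap is patchable from the corrector step: $\A\yk=\tau^{-1}(\xkp-\xk)-\nabla f(\xkp)$, so boundedness of $\{\xk\}$ and $\xk-\xkp\to0$ give boundedness of $\|\yk\|_{\A^T\A}$ and $\|\yk-\ykp\|_{\A^T\A}\to0$; combining the $\A^T\A$-component with the $G$-component via $\sigma^{-1}\|z\|^2=\|z\|_G^2+\tau\|z\|_{\A^T\A}^2$ recovers the full Euclidean statements. With that repair (or by simply assuming the strict inequality $\tau\sigma\|\A^T\A\|<1$, under which $G\succ0$ and $M$ is a genuine norm), your argument is complete.
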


The next intermediate result establishes pointwise quadratic supportability (Definition \ref{d:psc}) on bounded sets 
at all saddle points under Assumptions \ref{hyp:A1} and \ref{hyp:B}.
\begin{proposition}\label{t:muk}
Let $\{(\pk , \yk , \xk)\}_{k\in\N}$ be the sequence generated by
the PAPC algorithm.  If Assumptions \ref{hyp:A1} and \ref{hyp:B} are satisfied, then for
any primal solution $\hx$ to the saddle point problem \eqref{p:sp},  
there exists a $\mu>0$ such that 
 \begin{equation}\label{mufk}
f(\xk)\geq f(\hx)+\left\langle \nabla f(\hx), \xk-\hx\right\rangle +
\tfrac12\mu\|\xk-\hx\|^2 \quad \forall  k.
\end{equation}  
\end{proposition}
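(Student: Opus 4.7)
The plan is to combine three ingredients already available: (a) the convergence result of Proposition \ref{t:convergence}, which gives boundedness of the iterates; (b) Assumption \ref{hyp:B}, which gives pointwise quadratic supportability of $f$ at each $\hx\in X^*$ on some (a priori small) neighborhood; and (c) Proposition \ref{t:plc2pcbs}, which upgrades pointwise quadratic supportability for convex functions to the version on \emph{arbitrary} bounded sets. The strategy is then simply to exhibit a bounded set containing the entire iterate sequence together with $\hx$, and read off the constant $\mu$ from Proposition \ref{t:plc2pcbs} applied on that set.

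In detail, I would proceed as follows. First, fix an arbitrary primal solution $\hx\in X^*$. By Proposition \ref{t:convergence} (whose hypotheses are implied by Assumptions \ref{hyp:A1} and \ref{hyp:B}), the PAPC sequence $\{\xk\}_{k\in\N}$ converges to some saddle point $(\bar{x},\bar{y})$, so in particular $\{\xk\}$ is a bounded subset of $\Rn$. Choose $R>0$ large enough that $\xk\in R\mathbb{B}(\hx)$ for every $k\in\N$; such an $R$ exists because $\{\xk\}$ is bounded and $\hx$ is fixed.

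Next, by Assumption \ref{hyp:B}, $f$ is pointwise quadratically supportable at $\hx$ in the sense of Definition \ref{d:psc}\eqref{d:pscoer}. Since $f$ is convex (Assumption \ref{hyp:A1}\eqref{hyp:A1i}), Proposition \ref{t:plc2pcbs} applies and tells us that $f$ is pointwise quadratically supportable at $\hx$ on bounded sets. Applied to the bounded neighborhood $R\mathbb{B}(\hx)$, this yields a constant $\mu=\mu(R)>0$ such that
\[
  f(x)\geq f(\hx)+\bform{v,x-\hx}+\tfrac{1}{2}\mu\|x-\hx\|^2\qquad \forall\, x\in R\mathbb{B}(\hx),\ \forall\, v\in\partial f(\hx).
\]
Because $f$ is continuously differentiable (Assumption \ref{hyp:A1}\eqref{hyp:A1i}), $\partial f(\hx)=\{\nabla f(\hx)\}$, so the displayed inequality reduces to the claimed form with $v=\nabla f(\hx)$.

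Finally, specializing $x=\xk$ for each $k$ (permissible by the choice of $R$) yields \eqref{mufk} with this single constant $\mu$. Nothing in the argument is delicate: the only substantive step is the upgrade from a neighborhood-based constant to a bounded-set constant, which is precisely the content of Proposition \ref{t:plc2pcbs} and has already been proved. Consequently, the main obstacle---namely that Assumption \ref{hyp:B} a priori only controls $f$ on an unspecified neighborhood of $\hx$ while the iterates can in principle wander far from $\hx$---is entirely absorbed by Proposition \ref{t:plc2pcbs} together with the boundedness supplied by Proposition \ref{t:convergence}.
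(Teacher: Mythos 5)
Your proposal is correct and follows essentially the same route as the paper's own proof: boundedness of the iterates from Proposition \ref{t:convergence}, pointwise quadratic supportability at $\hx$ from Assumption \ref{hyp:B}, the upgrade to bounded sets via Proposition \ref{t:plc2pcbs}, and then specialization to a ball containing the whole sequence. The only difference is that you spell out the reduction of $\partial f(\hx)$ to $\{\nabla f(\hx)\}$ for the smooth $f$, which the paper leaves implicit.
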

\begin{proof}
Let $\hx$ be a primal solution to the saddle point problem.
By Proposition \ref{t:convergence} the sequence $\{\xk\}_{k\in\N}$ is bounded and 
indeed converges to a primal solution $\tilde{x}$, not necessarily the same point as $\hx$. 
A function $f$ satisfying Assumption \ref{hyp:B} is pointwise quadratically supportable  
at any primal saddle point solution to \eqref{p:sp}, 
and hence by 
Proposition \ref{t:plc2pcbs}, $f$ is pointwise quadratically supportable at $\hx$
on bounded convex sets.  In other words, there exists a $\mu>0$ and 
a ball $R\mathbb{B}(\hx)$ containing the sequence 
$\{\xk\}_{k\in\N}$ on its interior such that $f$ satisfies \eqref{mufk}, as claimed.
\end{proof}

The constant $\mu$ in Proposition \ref{t:muk} depends on the choice of $(x^0, y^0)$ and so depends implicitly
on the distance of the initial guess to the point in the set of saddle point solutions.  

\section{Convergence of the iterates}
\label{s:convergence}
\bigskip

In this section we show that, under Assumptions \ref{hyp:A1} and \ref{hyp:B}, 
the \emph{primal} sequence generated by the PAPC algorithm converges R-linearly to a saddle point. 
We recall the following simple observations (see \cite{drori2015simple}).
\begin{proposition}\label{obs} 
Let $(\hx,\hy)$ be a saddle point of $K$, and let $\{(\pk , \yk , \xk)\}_{k\in\N}$ be the sequence generated by
PAPC. Then, for all $k\geq 1$, 
\begin{equation}\label{obs-eq}
\nabla f(\xkp) - \nabla f(\hx)+ \A(\yk - \hy) + \sfrac{\tau}(\xk-\xkp) = 0.
\end{equation}
\end{proposition}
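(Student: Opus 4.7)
The plan is to derive the identity by combining the primal corrector step of PAPC with the first-order optimality condition satisfied at a saddle point. This is essentially an algebraic rearrangement, so the only real question is which ingredients to invoke.

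First I would rewrite the corrector update \eqref{papc:minx2} in residual form. Solving for the quantity $\frac{1}{\tau}(x^k - x^{k-1})$ gives
\begin{equation*}
\nabla f(x^{k-1}) + \mathcal{A} y^k + \tfrac{1}{\tau}(x^k - x^{k-1}) = 0,
\end{equation*}
which holds for every $k\geq 1$ directly from the definition of the algorithm. This step is purely mechanical.

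Next I would bring in the saddle point characterization. Since $(\hat x,\hat y)$ is a saddle point of $K$, the point $\hat x$ minimizes the differentiable convex function $x\mapsto K(x,\hat y) = f(x) + \langle x, \mathcal{A}\hat y\rangle - g^*(\hat y)$ over $\mathbb{R}^n$. The standard first-order optimality condition, together with Assumption \ref{hyp:A1}\eqref{hyp:A1i} that $f$ is differentiable, yields
\begin{equation*}
\nabla f(\hat x) + \mathcal{A}\hat y = 0.
\end{equation*}
This is the only nontrivial structural fact used, and it follows immediately from the existence of a saddle point guaranteed by Assumption \ref{hyp:A1}\eqref{hyp:A1iv} together with the discussion following \eqref{p:sp}.

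Finally I would subtract the second identity from the first, grouping the $\nabla f$ terms and the $\mathcal{A}$ terms, to obtain
\begin{equation*}
\nabla f(x^{k-1}) - \nabla f(\hat x) + \mathcal{A}(y^k - \hat y) + \tfrac{1}{\tau}(x^k - x^{k-1}) = 0,
\end{equation*}
which is exactly \eqref{obs-eq}. There is no meaningful obstacle: the result is simply the statement that the primal residual of the corrector step coincides with the deviation of $\nabla f(x^{k-1})+\mathcal{A}y^k$ from its value at the saddle point.
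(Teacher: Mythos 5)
Your argument is correct and is exactly the paper's proof: the paper likewise combines the corrector update \eqref{papc:minx2}, rearranged to $\nabla f(\xkp) + \A\yk + \sfrac{\tau}(\xk-\xkp) = 0$, with the saddle-point optimality condition $\nabla f(\hx) + \A\hy = 0$ and subtracts. You have merely spelled out the algebra that the paper leaves implicit.
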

\begin{proof}
This follows immediately from the update rule \eqref{papc:minx2}  together with the optimality condition for \eqref{p:sp}
$\nabla f(\hx) + \A\hy = 0$ where $(\hx,\hy)$ is a saddle point solution to \eqref{p:sp}.
\end{proof}

The next lemma uses the following shorthand notation:
\begin{equation}\label{G-def}
G:= \inv{\sigma}I_m  - \tau \A^T\A.
\end{equation}
Note that for the choice of $\tau$ given in \eqref{papc:parameters}, $G\succeq 0$.  

\begin{lemma}\label{lemlip:1}
Let $\{(\pk, \xk, \yk)\}_{k\in\N}$ be the sequence generated by the PAPC algorithm. 
Then for every $k \in\N$ and for
any $x\in\Rn$ and $y\in\Rm$ 
\begin{equation}\begin{split}\label{dr1}
K(\xk,y) - K(x,\yk) &\leq \sfrac{2}\left(\normsq{\ykp-y}_G -
\normsq{\yk-y}_G - \normsq{\ykp-\yk}_G\right) \\ &+ \sfrac{2\tau}\left(\normsq{\xkp-x} -
\normsq{\xk-x}\right) - \frac{1}{2}\left(\sfrac{\tau} - L_f\right)\normsq{\xk-\xkp}.
\end{split}\end{equation}
\end{lemma}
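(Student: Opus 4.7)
The plan is to bound $K(\xk,y)-K(x,\yk)$ by splitting it into a primal piece (involving $f$) and a dual piece (involving $g^*$), handling each via the appropriate first-order information, and then stitching the pieces together through the algebraic relation linking $\pk$ and $\xk$.

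First I would decompose the gap. Since $K(x,y)=f(x)+\bform{x,\A y}-g^*(y)$, adding and subtracting $\bform{\xk,\A\yk}$ gives
\begin{equation*}
K(\xk,y)-K(x,\yk) = \bigl[f(\xk)-f(x)+\bform{\A\yk,\xk-x}\bigr] + \bigl[\bform{\xk,\A(y-\yk)}+g^*(\yk)-g^*(y)\bigr].
\end{equation*}
For the primal bracket, I combine the $L_f$-descent lemma $f(\xk)\leq f(\xkp)+\bform{\nabla f(\xkp),\xk-\xkp}+\tfrac{L_f}{2}\|\xk-\xkp\|^2$ with the convexity inequality $f(\xkp)\leq f(x)+\bform{\nabla f(\xkp),\xkp-x}$. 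Eliminating $\nabla f(\xkp)$ via the corrector step \eqref{papc:minx2}, namely $\nabla f(\xkp)=\sfrac{\tau}(\xkp-\xk)-\A\yk$, and applying the three-point identity $\bform{\xkp-\xk,\xk-x}=\tfrac12(\|\xkp-x\|^2-\|\xk-x\|^2-\|\xkp-\xk\|^2)$ yields the desired $\|\xkp-x\|^2-\|\xk-x\|^2$ telescoping term together with the coefficient $-\tfrac12(\sfrac{\tau}-L_f)\|\xk-\xkp\|^2$.

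For the dual bracket, I invoke the optimality condition of the prox step \eqref{papc:miny}: $\A^T\pk-\sfrac{\sigma}(\yk-\ykp)\in\partial g^*(\yk)$. Convexity of $g^*$ then gives
\begin{equation*}
g^*(\yk)-g^*(y)\leq \bform{\pk,\A(\yk-y)}-\sfrac{\sigma}\bform{\yk-\ykp,\yk-y}.
\end{equation*}
Adding $\bform{\xk,\A(y-\yk)}$ produces $\bform{\xk-\pk,\A(y-\yk)}$, and the key observation is that subtracting the predictor \eqref{papc:minx1} from the corrector \eqref{papc:minx2} gives $\xk-\pk=-\tau\A(\yk-\ykp)$. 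Substituting, the cross term becomes $-\tau\bform{\yk-\ykp,\A^T\A(y-\yk)}$, which combined with $-\sfrac{\sigma}\bform{\yk-\ykp,\yk-y}$ collapses into $\bform{\yk-\ykp,G(y-\yk)}$ by the definition \eqref{G-def} of $G$. A final application of the three-point identity in the $G$-semi-inner-product (valid since $G\succeq 0$) produces $\tfrac12(\|\ykp-y\|^2_G-\|\yk-y\|^2_G-\|\yk-\ykp\|^2_G)$.

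Adding the two bracket bounds gives the lemma. The only delicate step is the recognition that the cross term $\bform{\xk-\pk,\A(y-\yk)}$ from the predictor–corrector mismatch is exactly what converts the Euclidean dual descent into a $G$-weighted descent; everything else is bookkeeping with the descent lemma, convex subgradient inequalities, and the three-point identity.
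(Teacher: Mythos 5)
Your proof is correct. The paper itself does not prove this lemma --- it simply cites \cite[Lemma 3.1]{drori2015simple} --- and your argument reconstructs the standard derivation from that reference: the descent lemma plus convexity for the $f$-part with $\nabla f(\xkp)$ eliminated via the corrector step, the prox optimality condition plus convexity of $g^*$ for the dual part, the identity $\xk-\pk=-\tau\A(\yk-\ykp)$ to absorb the predictor--corrector mismatch into the $G$-weighted term, and three-point identities to produce the telescoping norms. Every step checks out (the $G$-weighted three-point identity in fact needs only symmetry of $G$, not $G\succeq 0$), so your write-up supplies exactly the details the paper omits by citation.
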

\begin{proof}
\nr See Lemma \cite[Lemma 3.1]{drori2015simple}.
\end{proof}

The next lemma is key to the global linear convergence results presented in the next subsection.
\begin{lemma}\label{lemlip:2}
Let $(\hx, \hy)$ be a saddle point solution for $K(\cdot,\cdot)$ and  
let $(\pk, \xk, \yk)_{k\in\N}$ be the sequence generated by the PAPC algorithm. 
If Assumptions \ref{hyp:A1} and \ref{hyp:B} are satisfied,
then there exists a $\mu > 0$ such that for every $k \in\N$ 
\begin{equation}\begin{split}\label{dr2}
\twofrac{\mu}\normsq{\xk-\hx} &\leq \sfrac{2}\left(\normsq{\ykp-\hy}_G -
\normsq{\yk-\hy}_G - \normsq{\ykp-\yk}_G\right) \\ &+ \sfrac{2\tau}\left(\normsq{\xkp-\hx} -
\normsq{\xk-\hx}\right) - \frac{1}{2}\left(\sfrac{\tau} - L_f\right)\normsq{\xk-\xkp}.
\end{split}\end{equation}
\end{lemma}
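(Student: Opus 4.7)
The plan is to derive \eqref{dr2} as a specialization of \eqref{dr1} combined with a lower bound on the primal-dual gap $K(\xk,\hy) - K(\hx,\yk)$ coming from Assumption \ref{hyp:B}. Concretely, I would first invoke Lemma \ref{lemlip:1} at the specific choice $x = \hx$, $y = \hy$, which immediately yields the right-hand side of \eqref{dr2} as an upper bound for $K(\xk,\hy) - K(\hx,\yk)$. The remaining task is to show
\[
K(\xk,\hy) - K(\hx,\yk) \;\geq\; \tfrac{\mu}{2}\|\xk-\hx\|^2
\]
for all $k$ large enough, with the constant $\mu$ supplied by Proposition \ref{t:muk}.

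To establish this lower bound, I would first use the saddle point inequality $K(\hx,\yk)\leq K(\hx,\hy)$ to replace $\yk$ in the gap with $\hy$, giving
\[
K(\xk,\hy) - K(\hx,\yk) \;\geq\; K(\xk,\hy) - K(\hx,\hy).
\]
Expanding the definition of $K$ and using the primal--dual optimality condition $\nabla f(\hx) + \A\hy = 0$ (Proposition \ref{obs}), the $g^*$ terms cancel and the bilinear piece rearranges to
\[
K(\xk,\hy) - K(\hx,\hy) \;=\; f(\xk) - f(\hx) - \bform{\nabla f(\hx),\xk-\hx}.
\]
By Proposition \ref{t:muk}, which applies Assumption \ref{hyp:B} together with boundedness of $\{\xk\}$ from Proposition \ref{t:convergence}, there is a single constant $\mu>0$ (depending on the initial point) such that the right-hand side is bounded below by $\tfrac{\mu}{2}\|\xk-\hx\|^2$ for every $k$. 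Chaining the two inequalities gives the required lower bound on the gap.

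Combining this lower bound with the upper bound from Lemma \ref{lemlip:1} evaluated at $(x,y)=(\hx,\hy)$ yields \eqref{dr2}. The only subtle point, which is not really an obstacle thanks to Proposition \ref{t:muk}, is that pointwise quadratic supportability holds locally around $\hx$, whereas we need the inequality at every iterate $\xk$; boundedness of the iterates together with Proposition \ref{t:plc2pcbs} (pointwise quadratic supportability on bounded sets for convex functions) supplies a uniform constant $\mu$ valid along the whole sequence. No additional tools beyond the results already stated in the excerpt are needed.
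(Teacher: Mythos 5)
Your proposal is correct and follows essentially the same route as the paper: apply Lemma \ref{lemlip:1} at $(x,y)=(\hx,\hy)$, discard the nonnegative piece $K(\hx,\hy)-K(\hx,\yk)$ via the saddle point inequality, and lower-bound $K(\xk,\hy)-K(\hx,\hy)=f(\xk)-f(\hx)-\bform{\nabla f(\hx),\xk-\hx}$ by $\tfrac{\mu}{2}\normsq{\xk-\hx}$ using Proposition \ref{t:muk} and the optimality condition $\nabla f(\hx)+\A\hy=0$. The phrase ``for all $k$ large enough'' in your plan is a slip, but your execution correctly obtains the bound for every $k$ via the uniform $\mu$ from Proposition \ref{t:muk}.
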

\begin{proof}
Fix  $(\hx,\hy)$, the saddle point solution to \eqref{p:sp}.  By \eqref{dr1} of Lemma \ref{lemlip:1},
we have  
\begin{eqnarray} 
\!\!\!\!K(\xk,\hy) - K(\hx,\yk) &=& K(\xk,\hy) - K(\hx,\hy) + K(\hx,\hy) -
K(\hx,\yk)\nonumber\\
\!\!\!\!&\leq& \sfrac{2}\left(\normsq{\ykp-\hy}_G -
\normsq{\yk-\hy}_G - \normsq{\ykp-\yk}_G\right) \nonumber \\
\!\!&&+ \sfrac{2\tau}\left(\normsq{\xkp-\hx} -
\normsq{\xk-\hx}\right) - \frac{1}{2}\left(\sfrac{\tau} - L_f\right)\normsq{\xk-\xkp}.
\label{dr1b}
\end{eqnarray}
Recall that for the parameter values specified in \eqref{papc:parameters} $G\succeq0$. 
By the saddle point inequality $K(\hx,\hy) - K(\hx,\yk) \geq 0$, and by Proposition \ref{t:muk}
there is a $\mu>0$ such that 
\begin{equation*}
K(\xk,\hy) - K(\hx,\hy) = f(\xk) - f(\hx) + \bform{\xk-\hx, \A\hy} = f(\xk) - f(\hx) -
\bform{\xk-\hx, \nabla f(\hx)} \geq \twofrac{\mu}\normsq{\xk-\hx},
\end{equation*} 
where in the last inequality we have used optimality condition of problem ($\mathcal{M}$),
namely $\nabla f(\hx) + \A\hy = 0$, and the result follows.
\end{proof}

Note that the saddle point in Lemma \ref{lemlip:2} need not be the limit point of the 
sequence generated by PAPC. 

\subsection{Main Results - R-Linear Rate of the Primal Sequence}

Convergence of the primal-dual sequence becomes more transparent using 
$G$ in \eqref{G-def} to define a weighted norm on the primal-dual product space.  
\begin{equation}\label{H-def}
u = \begin{pmatrix}x\\y\end{pmatrix}, \qquad H = \begin{pmatrix} \tau^{-1}I_n & 0 \\ 0 &
G \end{pmatrix}
\end{equation}
where by the assumptions on the choice of $\tau$ given in \eqref{papc:parameters}, $G\succeq 0$.  
We can then define an associated norm using the positive-semidefinite matrix $H$, 
$\normsq{u}_H := \sfrac{\tau}\normsq{x} + \normsq{y}_G$.
The main inequality in Lemma \ref{lemlip:2} written in terms of this norm then becomes
\begin{equation}\label{mb:2}
\left(\sfrac{\tau} - L_f\right)\normsq{\xk-\xkp} + \normsq{\ykp-\yk}_G +
\mu\normsq{\xk-\hx} \leq \normsq{\ukp-\hu}_H - \normsq{\uk-\hu}_H.
\end{equation}

In order to establish the Q-linear (global) convergence of the associated
sequence $\{\uk\}_{k\in\N}$ with respect to the $H$-norm, 
we show that there exists a positive $\delta$ such that 
\begin{equation}\label{rate}
\forall k\in\N: \qquad (1 + \delta)\normsq{\uk-\hu}_H \leq \normsq{\ukp-\hu}_H.
\end{equation}
This will depend implicitly on $\mu$, the constant of pointwise strong convexity of $f$, which 
depends implicitly on the distance of the initial guess to the solution set $X^*$.  
In Theorem \ref{q-rate} we will show there exists a
positive $\delta$ such that for all $k\geq 1$,
\begin{align}
\delta\normsq{\uk-\hu}_H &\leq 
\left(\sfrac{\tau} - L_f\right)\normsq{\xk-\xkp} + \mu\normsq{\xk-\hx},
\label{mb:1}
\end{align}
Since adding \eqref{mb:1} to \eqref{mb:2}
(ignoring the term $\normsq{\ykp-\yk}_G$ which is nonnegative) yields \eqref{rate}, we obtain the
$Q$-linear rate for the sequence $\{\uk\}_{k\in\N}$ with respect to the $H$-norm.

\begin{theorem}\label{q-rate}
Let $\{(\pk, \xk, \yk)\}_{k\in\N}$ be the sequence generated by the PAPC algorithm, 
and let $(\hx,\hy)$ be any saddle point solution for $K(\cdot,\cdot)$.
If Assumptions \ref{hyp:A1} and \ref{hyp:B} are satisfied, then, for any $\alpha > 1$ and for all $k\geq
1$, the sequence $\{u^k\}_{k\in\N}$ satisfies
\begin{equation}\label{u:qrate}
\normsq{\uk - \hu}_H \leq \sfrac{1 + \delta}\normsq{\ukp - \hu}_H,
\end{equation}
where
\begin{equation}\label{delta-def}
\delta = \min \left\{\frac{(\alpha-1)\tau\sigma(1 -\tau L_f)\lambda_{min}(\A^T\A)}{\alpha},
\frac{\mu\tau\sigma\lambda_{min}(\A^T\A)}{\alpha\tau L_f^2 + \sigma \lambda_{min}(\A^T\A)}\right\}
\end{equation}
is positive and $\mu>0$ is the constant of pointwise quadratic supportability of $f$ at $\hat{x}$
depending on the distance of the initial guess to the point $(\hx,\hy)$ in the solution set $X^*$.
In particular, $\{(\xk, \yk)\}_{k\in\N}$ is $Q$-linearly convergent with respect to the $H$-norm to
a saddle-point solution.
\end{theorem}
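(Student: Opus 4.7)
The plan is to establish the pointwise bound \eqref{mb:1}, since adding it to \eqref{mb:2} and discarding the nonnegative term $\|y^{k-1}-y^k\|_G^2$ immediately yields $(1+\delta)\|u^k-\hat u\|_H^2 \le \|u^{k-1}-\hat u\|_H^2$, which is \eqref{u:qrate}. Writing out $\|u^k-\hat u\|_H^2 = \tfrac{1}{\tau}\|x^k-\hat x\|^2 + \|y^k-\hat y\|_G^2$, the $\tfrac{1}{\tau}\|x^k-\hat x\|^2$ piece will be absorbed into the $\mu\|x^k-\hat x\|^2$ term on the right (with $\mu>0$ furnished by Proposition \ref{t:muk}), so the substantive task is to bound the dual discrepancy $\|y^k-\hat y\|_G^2$ by a linear combination of $\|x^k-x^{k-1}\|^2$ and $\|x^k-\hat x\|^2$ with coefficients compatible with the two entries of $\delta$.

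For this dual estimate, the parameter choice \eqref{papc:parameters} gives $G\preceq\tfrac{1}{\sigma}I_m$, and Assumption \ref{hyp:A1}\eqref{hyp:A1iii} gives $\lambda_{min}(\mathcal{A}^T\mathcal{A})>0$, so I would begin from
\begin{equation*}
\|y^k-\hat y\|_G^2 \;\le\; \frac{1}{\sigma\,\lambda_{min}(\mathcal{A}^T\mathcal{A})}\,\|\mathcal{A}(y^k-\hat y)\|^2
\end{equation*}
and use Proposition \ref{obs} to express $\mathcal{A}(y^k-\hat y)$ in terms of primal quantities. A naive Lipschitz estimate would leave the wrong quantity $\|x^{k-1}-\hat x\|^2$ on the right-hand side; the fix is to insert $\nabla f(x^k)$ as an intermediate point and split
\begin{equation*}
\mathcal{A}(y^k-\hat y) \;=\; \bigl[(\nabla f(x^k)-\nabla f(x^{k-1})) - \tfrac{1}{\tau}(x^k-x^{k-1})\bigr]\;-\;\bigl[\nabla f(x^k)-\nabla f(\hat x)\bigr].
\end{equation*}
The second bracket has norm at most $L_f\|x^k-\hat x\|$ by Lipschitz continuity. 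For the first bracket, expanding the square produces $\tfrac{1}{\tau^2}\|x^k-x^{k-1}\|^2 + \|\nabla f(x^k)-\nabla f(x^{k-1})\|^2$ together with a cross term that Baillon--Haddad cocoercivity of $\nabla f$ (available because $f$ is convex with $L_f$-Lipschitz gradient) bounds above by $-\tfrac{2}{\tau L_f}\|\nabla f(x^k)-\nabla f(x^{k-1})\|^2$. Since $\tau L_f<1$, the surviving coefficient of $\|\nabla f(x^k)-\nabla f(x^{k-1})\|^2$ is nonpositive and can be dropped, so the squared norm of the first bracket is bounded by $\tfrac{1}{\tau^2}\|x^k-x^{k-1}\|^2$. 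Applying Young's inequality $\|a-b\|^2\le\tfrac{\alpha}{\alpha-1}\|a\|^2 + \alpha\|b\|^2$ for arbitrary $\alpha>1$ then yields
\begin{equation*}
\|\mathcal{A}(y^k-\hat y)\|^2 \;\le\; \frac{\alpha}{(\alpha-1)\tau^2}\|x^k-x^{k-1}\|^2 + \alpha L_f^2\,\|x^k-\hat x\|^2.
\end{equation*}

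Dividing by $\sigma\lambda_{min}(\mathcal{A}^T\mathcal{A})$ delivers the desired estimate on $\|y^k-\hat y\|_G^2$, after which \eqref{mb:1} holds for any $\delta$ satisfying the two scalar inequalities $\delta\cdot\tfrac{\alpha}{(\alpha-1)\tau^2\sigma\lambda_{min}(\mathcal{A}^T\mathcal{A})}\le \tfrac{1}{\tau}-L_f$ and $\delta\cdot\bigl(\tfrac{1}{\tau}+\tfrac{\alpha L_f^2}{\sigma\lambda_{min}(\mathcal{A}^T\mathcal{A})}\bigr)\le \mu$. Solving each with equality and taking the smaller of the two values reproduces precisely \eqref{delta-def}; both entries are strictly positive thanks to $\tau L_f<1$, $\alpha>1$, $\mu>0$ (Proposition \ref{t:muk}), and full-rankness, so $\delta>0$ and the $Q$-linear inequality \eqref{u:qrate} follows. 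The one genuinely subtle step is the cocoercivity cancellation in the first bracket: a plain triangle-inequality bound would produce a coefficient $(1/\tau+L_f)^2$ on $\|x^k-x^{k-1}\|^2$ that is incompatible with the factor $(1/\tau-L_f)$ already present in \eqref{mb:2}, so invoking Baillon--Haddad is what closes the gap. The dependence of $\delta$ on the starting point enters only through $\mu$ via Proposition \ref{t:muk}.
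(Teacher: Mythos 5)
Your proposal is correct and follows essentially the same route as the paper's proof: the same decomposition of $\A(\yk-\hy)$ via Proposition \ref{obs} with $\nabla f(\xk)$ inserted, the same cocoercivity (Baillon--Haddad) cancellation giving the bound $\sfrac{\tau^2}\normsq{\xk-\xkp}$, and the same Young-type inequality with parameter $\alpha$ (the paper applies the reverse form $\normsq{a+b}\geq(1-\alpha)\normsq{a}+(1-\sfrac{\alpha})\normsq{b}$, which is equivalent to your forward version), leading to the identical pair of scalar conditions defining $\delta$ in \eqref{delta-def}.
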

\begin{proof}
Let $(\hx,\hy)$ be any saddle point solution for $K(\cdot,\cdot)$.
Using \eqref{obs-eq} in Proposition \ref{obs}
$$ \sfrac{\tau}(\xk-\xkp) = \nabla f(\hx) - \nabla f(\xkp) + \A(\hy - \yk), $$
and by adding and subtracting $\nabla f(\xk)$ and rearranging, we have
\begin{equation}\label{pr:0}
\sfrac{\tau}(\xk-\xkp) - \left(\nabla f(\xk) - \nabla f(\xkp)\right)= \nabla f(\hx) - \nabla
f(\xk) + \A(\hy - \yk).
\end{equation}
Applying the inequality $\normsq{a+b} \geq (1 - \alpha)\normsq{a} + (1 -
\sfrac{\alpha})\normsq{b}\quad \forall \alpha>0$ to \eqref{pr:0}
with $a:= \nabla f(\hx) - \nabla f(\xk)$, and $b:=  \A(\hy - \yk)$, we have
\begin{eqnarray}
&&\Big\|\sfrac{\tau}\left(\xk-\xkp\right) - \left(\nabla f(\xk) - \nabla f(\xkp)\right)\Big\|^2\nonumber\\
&&\qquad\qquad\geq (1 - \alpha )\normsq{\nabla f(\hx) - \nabla f(\xk)} + \left(1 -
\sfrac{\alpha}\right)\normsq{\A(\hy - \yk)}.
\label{pr:1}
\end{eqnarray}
Next we have 
\begin{subequations}\label{pr:2}
\begin{align}
&\Big\|\sfrac{\tau}\left(\xk-\xkp\right) - \left(\nabla f(\xk) - \nabla f(\xkp)\right)\Big\|^2
\nonumber \\ &= \sfrac{\tau^2}\Big\|\xk-\xkp \Big\|^2 - \frac{2}{\tau}\bform{\xk-\xkp,\nabla f(\xk)
- \nabla f(\xkp)} + \normsq{\nabla f(\xk) - \nabla f(\xkp)} \nonumber \\
&\leq \sfrac{\tau^2}\Big\|\xk-\xkp \Big\|^2 + \left(1 - \frac{2}{\tau L_f}\right)
\normsq{\nabla f(\xk) - \nabla f(\xkp)} 
\label{pr:2a}\\
&\leq \sfrac{\tau^2}\|\xk-\xkp \|^2,
\label{pr:2b}
\end{align}
\end{subequations}
where in inequality \eqref{pr:2a} we used $\bform{\xk-\xkp,\nabla f(\xk) - \nabla
f(\xkp)} \geq \sfrac{L_f}\normsq{\nabla f(\xk) - \nabla f(\xkp)}$ (cf.
\cite[Thm. 2.1.5]{nesterov2004introductory}, and in  inequality \eqref{pr:2b} we used 
the fact that $\tau L_f < 1$.
Adding \eqref{pr:1}-\eqref{pr:2} we get 
$$ \sfrac{\tau^2}\normsq{\xk-\xkp} + (\alpha
- 1)\normsq{\nabla f(\hx) - \nabla f(\xk)} \geq \left(1 - \sfrac{\alpha}\right)\normsq{\A(\hy - \yk)}. $$
For any $\alpha > 1$ (so that $1- \sfrac{\alpha} > 0$), we can bound from below the right-hand side
of the later inequality   
by Assumption 1\eqref{hyp:A1iii} and bound from above the left-hand side by
Assumption 1\eqref{hyp:A1i} to yield
\begin{equation*}
\sfrac{\tau^2}\normsq{\xk-\xkp} +  (\alpha -1)L_f^2\normsq{\hx - \xk} \geq
\left(1 -
\sfrac{\alpha}\right) \lambda_{min}(\A^T\A)\normsq{\hy - \yk}.
\end{equation*}

Using the semi-norm notation to write 
$\normsq{\hy- \yk} = \sigma\normsq{\hy-\yk}_{\inv{\sigma}I_m}$
yields
\begin{equation*}
\sfrac{\tau^2}\normsq{\xk-\xkp} +  (\alpha -1)L_f^2\normsq{\hx - \xk} \geq
\left(1 - \sfrac{\alpha}\right) \sigma \lambda_{min}(\A^T\A)\normsq{\hy - \yk}_{\inv{\sigma}I_m}.
\end{equation*}
Subtracting from both sides of the inequality the term 
\[
\left(1 -\sfrac{\alpha}\right) \sigma \lambda_{min}(\A^T\A)\normsq{\hy-\yk}_{\tau\A^T\A}
 \]
 (which is positive since $\alpha>1$, $\tau > 0$
and $\A^T\A \in\mathbb{S}^m_{++}$)  together with the notation of
$\|\cdot\|_G$ defined in \eqref{G-def}, yields
\begin{equation*}
\sfrac{\tau^2}\normsq{\xk-\xkp} +  (\alpha -1)L_f^2\normsq{\hx - \xk} \geq
\left(1 - \sfrac{\alpha}\right) \sigma \lambda_{min}(\A^T\A)\normsq{\hy - \yk}_{G}.
\end{equation*}
After multiplication of both sides by $\frac{\alpha}{(\alpha-1)\sigma \lambda_{min}(\A^T\A)}$, we
get
\begin{equation*}
\left(\frac{\alpha}{\alpha-1}\right)\frac{1}{\sigma\tau^2
\lambda_{min}(\A^T\A)}\normsq{\xk-\xkp} + \frac{\alpha L_f^2}{\sigma \lambda_{min}(\A^T\A)}
\normsq{\hx - \xk} \geq \normsq{\hy - \yk}_{G}.
\end{equation*}
Finally, adding to both sides of the inequality $\sfrac{\tau}\normsq{\xk-\hx}$, and the
definition of the associated norm $\normsq{\hu - \uk}_{H} :=
\sfrac{\tau}\normsq{\hx-\xk} + \normsq{\hy-\yk}_G$, we have
\begin{equation}\label{pr:3}
\left(\frac{\alpha}{\alpha-1}\right)\sfrac{\sigma \tau^2
\lambda_{min}(\A^T\A)}\normsq{\xk-\xkp} + \frac{\alpha \tau L_f^2 + 
\sigma \lambda_{min}(\A^T\A)}{\tau\sigma \lambda_{min}(\A^T\A)}\normsq{\hx
- \xk} \geq \normsq{\hu - \uk}_{H}.
\end{equation}

Now, for $\mu>0$ satisfying \eqref{dr2} in Lemma \ref{lemlip:2}, 
we choose $\delta$ so that 
\begin{equation}\label{pr:4}
\sfrac{\tau} - L_f \geq \left[\left(\frac{\alpha}{\alpha-1}\right)\sfrac{\sigma \tau^2
\lambda_{min}(\A^T\A)}\right] \delta, \qquad \mu \geq \left[\frac{\alpha \tau L_f^2 + \sigma
\lambda_{min}(\A^T\A)}{\tau\sigma \lambda_{min}(\A^T\A)}\right] \delta.
\end{equation}
This establishes the choice of $\delta$ in \eqref{delta-def}.  Moreover,  $\delta$
is positive since $\tau \in (0,1/L_f)$ and $\alpha > 1$. Multiplying the inequality
\eqref{pr:3} by this $\delta$, we have
\begin{align*}
\left(\sfrac{\tau} - L_f\right)\normsq{\xk-\xkp} + \mu\normsq{\xk-\hx} \geq
\delta\normsq{\uk-\hu}_H.
\end{align*}
Adding the above inequality to \eqref{dr2} in Lemma \ref{lemlip:2} yields \eqref{u:qrate}.
Since by Proposition \ref{t:convergence} sequence $\{(\xk, \yk)\}_{k\in\N}$ converges 
to a saddle point $\bar{u}\equiv (\bar{x}, \bar{y})$ and the choice of saddle point $(\hx, \hy)$ 
in the argument above was arbitrary, we have, for $\delta$ satisfying \eqref{delta-def}, 
\begin{equation}\label{u:qrate2}
\normsq{\uk - \bar{u}}_H \leq \sfrac{1 + \delta}\normsq{\ukp - \bar{u}}_H
% \leq 
% \left(\sfrac{1 + \delta}\right)^k\normsq{u^0 - \bar{u}}_H.
\end{equation} 
This completes the proof.
\end{proof}

In fact, the above theorem about convergence of the PAPC algorithm actually proves 
a much more striking fact that is independent of the algorithm.
\begin{corollary}[unique saddlepoint]\label{t:uniqueness}
If Assumptions \ref{hyp:A1} and \ref{hyp:B} are satisfied then the solution set $X^*$ is a singleton.
\end{corollary}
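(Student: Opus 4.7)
The plan is to exploit the fact that Theorem \ref{q-rate} applies to \emph{every} saddle point, not just the limit of the PAPC sequence, and then use uniqueness of limits of a convergent sequence to force all primal solutions to coincide. The crucial observation is that the $H$-norm in \eqref{H-def} has a genuine positive-definite block $\tau^{-1} I_n$ acting on the primal variable, so convergence in $H$-norm implies Euclidean convergence on the primal component regardless of whether $G$ is merely positive semi-definite.

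More concretely, I would proceed as follows. First, fix an arbitrary $\hat{x} \in X^*$. Under the standard constraint qualifications invoked in Section \ref{s:basics} (e.g., Slater-type conditions), primal optimality of $\hat{x}$ for \eqref{ch:main} is equivalent to the existence of some $\hat{y} \in \R^m$ making $(\hat{x},\hat{y})$ a saddle point of $K$; pick any such $\hat{y}$. Second, apply Theorem \ref{q-rate} with this particular saddle point $(\hat{x},\hat{y})$: since Assumptions \ref{hyp:A1} and \ref{hyp:B} are in force and $\mu>0$ is guaranteed by Proposition \ref{t:muk}, we get
\begin{equation*}
\|u^k - \hat{u}\|_H^2 \leq \frac{1}{(1+\delta)^k}\|u^0 - \hat{u}\|_H^2 \longrightarrow 0,
\end{equation*}
where $u^k = (x^k,y^k)$ and $\hat{u}=(\hat{x},\hat{y})$. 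Third, extract the primal block: because
\begin{equation*}
\tau^{-1}\|x^k - \hat{x}\|^2 \leq \|u^k - \hat{u}\|_H^2,
\end{equation*}
we conclude $x^k \to \hat{x}$ in the Euclidean norm. Fourth, invoke Proposition \ref{t:convergence}, which guarantees that $\{x^k\}_{k\in\N}$ converges to a \emph{single} primal limit $\bar{x}$. By uniqueness of limits, $\hat{x} = \bar{x}$. Since $\hat{x}$ was chosen arbitrarily in $X^*$, we obtain $X^* = \{\bar{x}\}$.

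There is essentially no technical obstacle here, and no routine calculation to grind through; the entire content of the corollary is already packed into Theorem \ref{q-rate}. The only subtlety worth flagging is that the rate constant $\delta$ in \eqref{delta-def} depends implicitly on the initialization through the constant $\mu$ of pointwise quadratic supportability at the chosen $\hat{x}$, but since $\mu>0$ holds for \emph{every} element of $X^*$ under Assumption \ref{hyp:B}, the argument above goes through uniformly for any candidate primal solution. Note that this argument does \emph{not} establish uniqueness of the dual solution $\hat{y}$: the $G$-block of $H$ can be only positive semi-definite, so convergence in $H$-norm need not force convergence in the dual component. Uniqueness of the full saddle point would require additional assumptions (for instance, that $\A$ have full column rank or that $g^*$ also be pointwise quadratically supportable at dual solutions), and we do not claim it here.
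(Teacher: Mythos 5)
Your proof is correct and follows essentially the same route as the paper: apply Theorem \ref{q-rate} to an arbitrary saddle point, deduce $H$-norm convergence of $u^k$ to it, and conclude by uniqueness of limits that every primal solution coincides with the limit $\bar{x}$. You are in fact slightly more careful than the paper's one-line argument (which writes $\hat{u}\in X^*$ and asserts $u^k\to\hat{u}$ outright): your observation that only the $\tau^{-1}I_n$ block of $H$ is positive definite, so that $H$-norm convergence pins down the primal component but not necessarily the dual one, is precisely what is needed to justify the corollary's claim that $X^*$ --- the set of \emph{primal} solutions --- is a singleton.
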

\begin{proof}
Inequality \eqref{u:qrate} holds for all $k$ for any point $\hu\in X^*$, thus $\uk\to \hu$ as $k\to\infty$  for all
$\hu\in X^*$, which can only happen if $X^*=\{\bar{u}\}$,  the limit of the sequence $\{\uk\}_{k\in\mathbb{N}}$.  
\end{proof}

We also immediately obtain the following corollary, which
establishes the expected R-linear iteration complexity for the primal sequence of PAPC for 
obtaining $\eps$-optimal solution.  A sublinear complexity of $O(1/\epsilon)$ 
in terms of saddle point gap function values was shown in \cite[Corollary
3.1]{drori2015simple} for the ergodic sequence.  In the 
next corollary an R-linear rate can be guaranteed under
the assumption of pointwise quadratic supportability and the full rank assumption on the linear mappings 
(Assumption \ref{hyp:A1}\eqref{hyp:A1iii}).
\begin{corollary}\label{r-corollary}
Suppose Assumptions \ref{hyp:A1} and \ref{hyp:B} are satisfied and let $\bar{u}=(\bar{x}, \bar{y})$ be 
the limit point of the sequence generated by the PAPC algorithm.  In order obtain
\begin{equation}\label{obt}
\norm{\xk-\bar{x}} \leq \eps \qquad (\textrm{resp.} \quad \norm{\yk-\bar{y}}_G \leq \eps),
\end{equation}
it suffices to compute $k$ iterations, with
\begin{equation}
k \geq \frac{2\log\left(\frac{C}{L_f\eps}\right)}{\delta} \qquad \left(\textrm{resp.} \quad k \geq
\frac{2\log\left(\frac{C}{\eps}\right)}{\delta}\right),
\end{equation}
where $ C = \norm{u^0 - \bar{u}}_H = \left(\sfrac{\tau}\normsq{x^0-\bar{x}} +
\normsq{y^0-\bar{y}}_G\right)^{1/2}$, and $\delta$ is given in \eqref{delta-def}.
\end{corollary}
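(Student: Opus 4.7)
The plan is to derive this corollary as a direct consequence of the Q-linear rate already established in Theorem \ref{q-rate}. The essential work has already been done there; what remains is bookkeeping to translate the contraction in the joint $H$-norm into iteration counts for the primal and dual components separately, at the accuracy $\epsilon$.

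First, I would iterate the single-step contraction \eqref{u:qrate} backwards in $k$ to obtain the telescoped bound
\begin{equation*}
\|u^k-\bar u\|_H^2 \;\leq\; \left(\frac{1}{1+\delta}\right)^{k}\|u^0-\bar u\|_H^2 \;=\; \left(\frac{1}{1+\delta}\right)^{k} C^2,
\end{equation*}
with $C = \|u^0-\bar u\|_H$ exactly as defined in the statement. This uses Theorem \ref{q-rate} with the saddle point $(\hx,\hy)$ chosen to be the limit point $(\bar x,\bar y)$, which is legitimate since the theorem holds for any saddle point solution.

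Next, I would split $\|u^k-\bar u\|_H^2$ using the block-diagonal form \eqref{H-def} of $H$, namely $\|u^k-\bar u\|_H^2 = \tau^{-1}\|x^k-\bar x\|^2 + \|y^k-\bar y\|_G^2$. Since both terms on the right are nonnegative, each is bounded above by $(1+\delta)^{-k}C^2$:
\begin{equation*}
\|x^k-\bar x\|^2 \;\leq\; \tau\,(1+\delta)^{-k}C^2 \quad\text{and}\quad \|y^k-\bar y\|_G^2 \;\leq\; (1+\delta)^{-k}C^2.
\end{equation*}
Using the parameter bound $\tau < 1/L_f$ from \eqref{papc:parameters} in the primal inequality and rearranging each of the two bounds, the requirement \eqref{obt} is met as soon as
\begin{equation*}
(1+\delta)^{k} \;\geq\; \frac{C^2}{L_f\epsilon^2} \quad\text{(primal)}, \qquad (1+\delta)^{k} \;\geq\; \frac{C^2}{\epsilon^2} \quad\text{(dual)}.
\end{equation*}
Taking logarithms and using the elementary inequality $\log(1+\delta) \geq \delta/(1+\delta)$ (or the sharper $\log(1+\delta)\geq \delta/2$ valid for $\delta$ not too large, which is the regime of interest since $\delta$ in \eqref{delta-def} is small) converts these into the claimed sufficient iteration counts in terms of $\delta$.

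There is no real obstacle here; the only modest subtlety is the logarithmic lower bound used to pass from $\log(1+\delta)$ to $\delta$ in the denominator, and the use of $\tau < 1/L_f$ to introduce the factor $L_f$ inside the logarithm for the primal bound. Everything else is an unwinding of the $H$-norm definition and of Theorem \ref{q-rate}.
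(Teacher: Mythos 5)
Your proposal follows essentially the same route as the paper: telescope the contraction \eqref{u:qrate} of Theorem \ref{q-rate} applied at the limit point $\bar{u}$, split $\|u^k-\bar{u}\|_H^2$ into its primal and dual blocks via \eqref{H-def}, invoke $\tau<1/L_f$ on the primal block, and take logarithms. The one substantive point of divergence is the final logarithmic step, and here you should be aware that your (more careful) version does not literally reproduce the stated constants. The paper passes from the exact requirement $k\geq 2\log(C/(L_f\epsilon))/\log(1+\delta)$ to the displayed $k\geq 2\log(C/(L_f\epsilon))/\delta$ by citing $\log(1+\delta)\leq\delta$, which runs in the wrong direction for a sufficiency claim; your inequality $\log(1+\delta)\geq\delta/(1+\delta)$ (or $\geq\delta/2$ for small $\delta$) is the correct direction, but it then yields the sufficient count $2(1+\delta)\log(\cdot)/\delta$ (resp.\ $4\log(\cdot)/\delta$), not $2\log(\cdot)/\delta$, so you cannot honestly claim to land on the statement's constant. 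A second, smaller discrepancy: from $\tau^{-1}\|x^k-\bar{x}\|^2\leq\|u^k-\bar{u}\|_H^2$ you correctly obtain the factor $\sqrt{\tau}<1/\sqrt{L_f}$, so $L_f$ enters the logarithm under a square root, whereas the paper writes $\|x^k-\bar{x}\|\leq\tau C\omega^{k/2}$ and obtains the full $1/L_f$. Neither issue changes the $O\!\left(\delta^{-1}\log(1/\epsilon)\right)$ form of the complexity, but your write-up should either carry the extra constant factor explicitly or note that the corollary's displayed bound is only recovered via the paper's looser bookkeeping.
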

\begin{proof}
We have by Theorem \ref{q-rate} that $\norm{\uk - \bar{u}} \leq C\omega^{k/2}$, where 
$$ C = \norm{u^0 - \bar{u}}_H = \left(\sfrac{\tau}\normsq{x^0-\bar{x}} + \normsq{y^0-\bar{y}}_G\right)^{1/2}
\qquad \text{and} \quad \omega = \left(1 + \delta\right)^{-1}. $$ 
Hence, 
$$\norm{\xk - \bar{x}} \leq \tau C \omega^{k/2} < \frac{C}{L_f}\omega^{k/2}, \quad
\normsq{\yk - \bar{y}}_G \leq C \omega^{k/2},$$
where in the first inequality we have used $\tau < \sfrac{L_f}$. 
To get \eqref{obt}, it suffices to ensure that
$\frac{C}{L_f}\omega^{k/2}\leq \eps$, which is equivalent to (since $\omega\in (0,1)$)
$$ k \geq \frac{2\log(C/(L_f \eps))}{-\log(\omega)} = \frac{2\log(C/(L_f \eps))}{-\log\left((1 +
\delta)^{-1}\right)}  = \frac{2\log(C/(L_f \eps))}{\log\left(1 +
\delta\right)} \geq \frac{2\log(C/(L_f \eps))}{\delta}, $$
where we used $1 + \log \delta \leq \delta \quad \forall \delta > 0$. The results of the R-linear
rate for dual sequence $\|\yk - \bar{y}\|_G$ follows similarly. This completes the proof.
\end{proof} 

The results we obtain in Theorem \ref{q-rate} and Corollary \ref{r-corollary} show that the
convergence rates depend on the parameter $\delta$ which also depends on the problem's data,
the controlling parameters and a free parameter. It is desirable to minimize the theoretical bound
$\sfrac{1+\delta}$ of the $Q$-linear rate in \eqref{u:qrate} or alternatively to maximize $\delta$.
Here we consider
the choice of the free parameter $\alpha > 1$ and the controlling parameters $\tau$ and $\sigma$
which minimize the $Q$-linear rate upper bound given in \eqref{u:qrate}.

\medskip

%\topic{Tuning the algorithm's parameters}\\
One observes that $\delta$ cannot grow without bound by any  choice of the controlling parameters
and the free parameter, thus $\delta \leq \delta_m$ where $\delta_m$ is finite.
First we set the other parameter $\sigma = \sfrac{\tau \lambda_{max}(\A^T\A)}$, and then we maximize
$\delta$ with respect to the step size parameter $\tau$.

\medskip

For convenience, 
denote the condition number of $\A$ by
$$ \kappa_{\A} := \sigma^2_{max}(\A)/\sigma^2_{min}(\A) \equiv 
\lambda_{max}(\A^T\A)/\lambda_{min}(\A^T\A).$$ We also denote the condition modulus of $f$ by 
$$ \kappa_f  \equiv L_f/\mu. $$

\nr Set $\sigma =
\frac{1}{\tau \lambda_{\max}(\A^T\A)}$ in \eqref{delta-def}, this yields the expression
\begin{equation}\label{delta3}
\delta = \min \left\{\frac{(\alpha-1)(1 -\tau L_f)\inv{\kappa}_{\A}}{\alpha},
\frac{\mu\inv{\kappa}_{\A}}{\alpha\tau L_f^2 + \frac{\inv{\kappa}_{\A}}{\tau}}\right\}.
\end{equation}
The parameter $\tau\in(0, \sfrac{L_f})$ which can maximize $\delta$ in \eqref{delta3} can be found
only from the second term inside the minimization above, and it evaluates to:
\begin{equation}\label{tau-b}
\tau_m = \frac{1}{\sqrt{\kappa_{\A}\alpha}L_f} =: \sfrac{\rho L_f},
\end{equation}
where we define the quantity $\rho = \sqrt{\kappa_\A \alpha}$.
Evaluating $\delta$ at $\tau_m$, we get
\begin{align*}\label{delta4}
\delta_m = \delta (\tau_m) &=  
\min \left\{\frac{(\alpha-1)(1 - \kappa^{-1/2}_{\A}\alpha^{-1/2})}{\alpha\kappa_\A },
\frac{1}{2\alpha^{1/2} \kappa^{1/2}_{\A}\kappa_f }\right\} \\
&= \min \left\{\frac{(\rho^2-\kappa_\A)(1 - \rho^{-1})}{\rho^2\kappa_\A },
\frac{1}{2\rho\kappa_f }\right\} 
\end{align*}
The first term is monotonically increasing and the second terms is monotonically decreasing with
respect to $\rho$, where $\rho > \sqrt{\kappa_\A}$.
To maximize $\delta$, we choose $\rho > \sqrt{\kappa_\A}$ (which corresponds to $\alpha > 1$) such
that the two terms are equal.
Some calculations show that this value can be found from the real solution of the cubic equation

$$ \rho^3 - \left(1 + \frac{\kappa_\A}{2\kappa_f}\right)\rho^2 - \kappa_\A \rho + \kappa_\A  = 0, $$ 
where $\rho > \sqrt{\kappa_\A}$.
For specific values $\kappa_\A, \kappa_f \geq 1$ the root can be found by the bisection method over
an appropriate interval, for example, when $\kappa_\A = 1, \kappa_f = 1$, the root is found in the
interval $(1,2]$ at $\rho\simeq 1.744$, then we may choose $\tau_m \simeq \sfrac{1.744 L_f} \simeq 0.57/L_f$.
In addition when $\kappa_f \gg \kappa_\A$, we benefit from taking the
gradient step size $\tau$ to be very close to $1/L_f$ as $\rho$ tends to $1$ in that case. 

Overall, the value of $\delta_m$ decreases as $\kappa_\A$ or $\kappa_f$ increase,
which affects the practical performance of the algorithm. 
We thus arrive at the very natural conclusion that the smaller the condition numbers 
of $\kappa_\A$ and $\kappa_f$, the better the performance of the algorithm.

\section{Computational Examples}\label{s:numerics}
We now illustrate the theoretical results with numerical 
experiments from image processing.
We study two convex optimization problems 
in signal processing and imaging: image denoising with $L_2$-discrepancy and 
TV regularization, followed by a study of simultaneous image deconvolution 
and denoising with statistical multisresolution side constraints.

\subsection{Total Variation - L$_2$-discrepancy and TV regularization}

We test PAPC for a discrete $L_2$-TV signal denoising problem introduced in 
\cite{rudin1992nonlinear}.
We apply the discretized
variational problem of the total variation based signal denoising model according to
\begin{equation}\label{den:main}
\min_{x\in X} \lambda \|\nabla x\|_1 + \twofrac{1}\normsq{x - b}_2, 
\end{equation}
where $X$ is a finite dimensional vector space equipped with the standard scalar product
 $$ \bform{x,x'} = \sum_{i}x_{i}x'_{i}, \qquad \forall x,x'\in X. $$
The gradient $\nabla : X\rarr Y$ is a vector in the vector space $Y=X$ and is defined by forward
finite-difference operator with Dirichlet boundary conditions,
 $b \in X$ is the noisy signal, and $\lambda > 0$ is a positive
parameter.
The minimization problem \eqref{den:main} can be written as a saddle point problem 
$$ \min_{x\in X} \max_{y\in Y} \bform{\mathcal{L}x, y}+F(x)-G(y) $$
with primal variable $x \in X$, dual variable $y \in Y$, $F(x) = \sfrac{2}\normsq{x-b}$, 
$G(y) = \delta_P(y)$, where $P$ is given by the convex set $P= \{ y : \| y \|_\infty\leq \lambda\}$ 
and $\| y \|_\infty$ denotes the discrete maximum norm and the function $\delta_P$ denotes the
indicator function of the set $P$, i.e., $$\delta_P(y)= \begin{cases} 0 & \text{if $y \in P$}, \\
\infty & \text{otherwise}, \end{cases} $$
and  $\mathcal{L} = \nabla$. The adjoint $\nabla^*$ is the negative  divergence, i.e.,
the unique linear mapping $\textrm{div} : Y \rarr X$ which satisfies $\bform{x, \nabla y}_Y =
\bform{ x, \nabla^* y}_X =  - \bform{x, \textrm{div}\;y}_X$ for all $x \in X, y \in Y$.
Finite differences are used for the discrete operator
$\nabla$ and its adjoint operator $\nabla^* = -\textrm{div}$ with the Dirichlet boundary conditions:
$$
(\partial x)_i = \begin{cases}
x_{i+1} - x_{i} & \text{if $1\leq i < n$} \\
 -x_{n} & \text{if $i=n$}
\end{cases},
\qquad 
(\partial^* x)_i = \begin{cases} x_{1} & \text{if $i=1$,} \\
 x_{i}-x_{i-1} & \text{if $1<i<n$,} \\
 x_{n} - x_{n-1} & \text {if $i = n$.}\end{cases}
$$
In each iteration of the PAPC algorithm we must evaluate the gradient of a smooth (strongly)
convex function $F(\cdot)$ as well as the resolvent of $G$ $$ (I + \sigma \partial G)^{-1}(y) =
\argmin_{y'\in Y} \left\{\sfrac{2}\normsq{y'-y} + \mathcal{I}_{\| y \|_\infty\leq \lambda}(y')\right\} =
\frac{y}{\max\{1, |y|/\lambda\}}. $$
The bound on the square of norm of the operator $\nabla $ (resp. \textrm{div}) in the one
dimensional case, $$ \|\nabla\|^2 = \|\textrm{div}\|^2 \leq \sumi[m-1](x_i - x_{i+1})^2 \leq 2
\sumi[m-1](x_i^2 + x_{i+1}^2) \leq 4 \sumi[m] x_i^2 \leq 4. $$ 

Since the term $F(\cdot)$ is strongly convex and continuously differentiable with Lipschitz
continuous gradient, and the operator in the bilinear term satisfies $\mathcal{L}^T\mathcal{L} = -
\Delta$ (negative of the Laplacian operator) is positive definite under the Dirichlet boundary 
condition (see Appendix A), we obtain the expected $R$-linear rate for the sequence 
$\{\xk\}_{k\in\N}$ based on the theory developed in Section \ref{s:convergence}.

We illustrate the linear convergence of the method for finding the solution of the
L$_2$-total-variation denoising problem in $\R^N$ with discretization $N=256$. The noisy
image is perturbed by an additive Gaussian noise with standard deviation $0.03$. The
regularization parameter $\lambda$ was set to $0.05$.
The denoising result is shown in Figure \ref{fig:test1}(c). In the numerical experiments, we monitor
the steps $\|\ukn - \uk\|_H$, where $\{u\}_{k\in\N}$ is the primal-dual PAPC 
sequence.  
The a posteriori upper bound on the distance of the $k$th iterate to the true solution is $\|\uk-
\hu\|_H \leq \frac{c}{1-c}\|\uk - \ukp\|_H$.  With an estimated convergence rate of  $c=0.96$ for the run
with $\tau=0.05$ and $c = 0.985$ for the run with $\tau=0.9$, this corresponds to an a posteriori
upper estimate of the pointwise error at the iteration $k=220$ of
$1.0*10^{-6}$, or about five digits of accuracy at each pixel, for the run with $\tau=0.05$, and
$2.1*10^{-5}$, or about 4 digits of accuracy at each pixel, for the run with $\tau=0.9$.

\begin{figure}[htp!]
\centering
\begin{subfigure}[]
{\includegraphics[width=3.0in]{./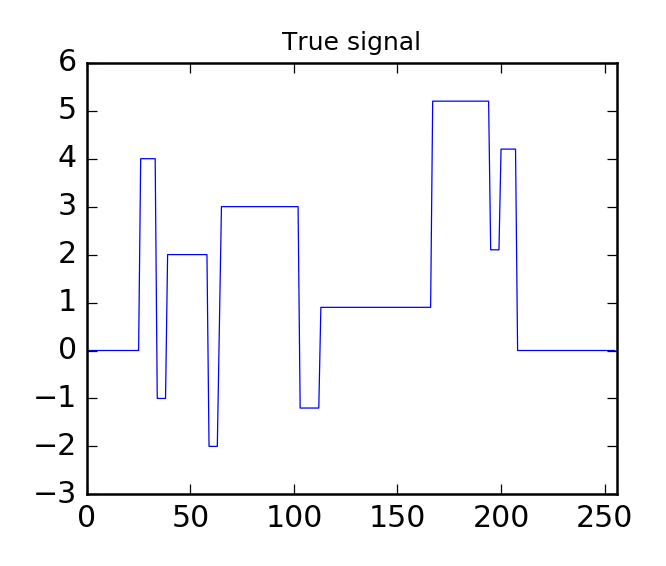}}
\end{subfigure}
\begin{subfigure}[]
{\includegraphics[width=3.0in]{./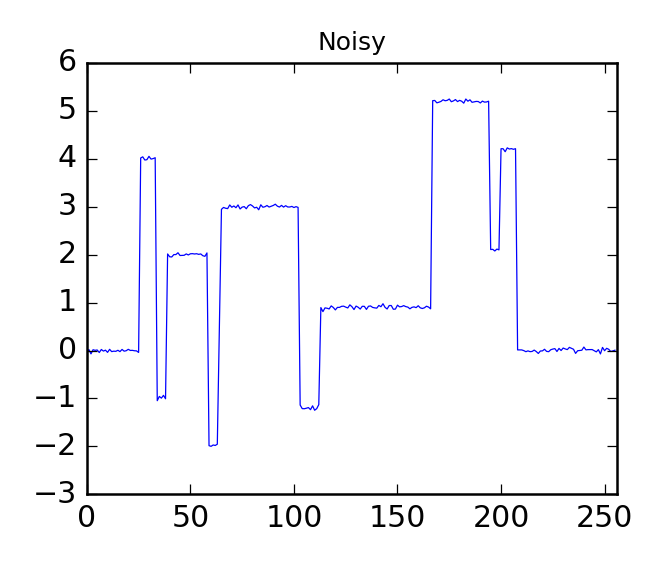}}
\end{subfigure}
\begin{subfigure}[]
{\includegraphics[width=3.0in]{./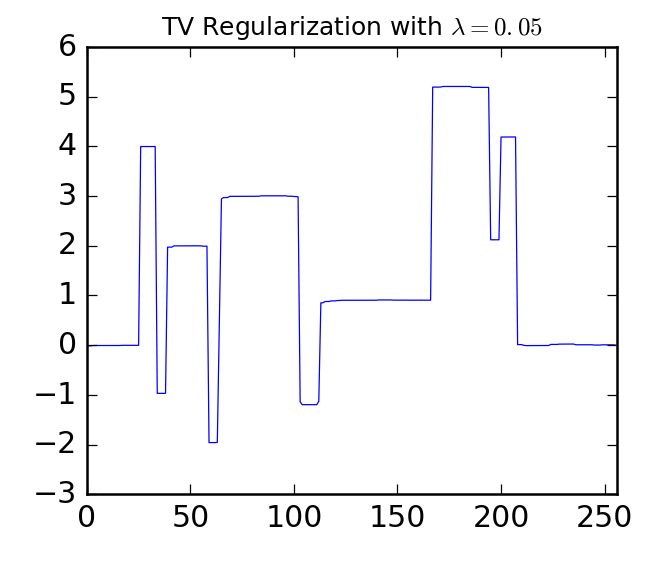}}
\end{subfigure}
\begin{subfigure}[]
{\includegraphics[width=3.1in]{./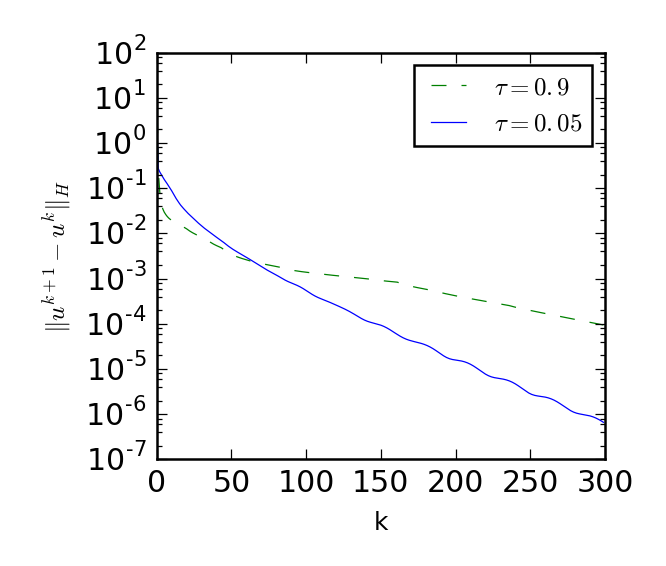}}
\end{subfigure}
\caption{{\footnotesize Performance evaluation of the PAPC algorithm for 
L$_2$-TV denoising.
(a) Original signal.
(b) Noisy signal with additive Gaussian noise, standard deviation 0.03 (c) The recovered signal
using TV regularization with $\lambda = 0.05$. (d) The a posteriori error bound for linearly convergent iteration 
 $\|\ukn - \uk\|_H$ of
the associated sequence as function of the number of iterations. The $x$-axis
represents the number of iterations, and the $y$-axis represent the distance of consecutive iterates
$\|\ukn - \uk\|_H$ in logarithmic scale. }}
\label{fig:test1}
\end{figure}

\newpage
\subsection{Statistical Multiresolution Analysis}

In this subsection we apply the spatially-adaptive method for signal and image
reconstruction that is based on Statistical MultiResolution Estimation (SMRE)
as introduced in \cite{aspelmeier2015modern,frick2012statistical,frick2013statistical}.
The variational manifestation of statistical multiresulution estimation is an optimization 
problem whose solution has a quantitative/statistical interpretation.  We are not aware 
of any other variational image processing models whose solutions have such scientific 
content.  The relevance to the theory presented in the previous sections is that, 
if one cannot estimate the distance of an iterate to a solution of the SMRE 
variational problem, then the statistical significance of the iterate generated 
by the algorithm cannot be determined -- merely knowing that the sequence 
converges, or even knowing that the objective value converges at a particular rate, 
is useless information and empty of any scientific content.

In \cite{aspelmeier2016local} a portion of the sequence (the dual sequence) of the 
alternating directions method of multipliers was shown to be locally linearly convergent, which 
yielded error estimates on solutions to iteratively regularized subproblems. We will 
give more specifics on the implementation below, but the advancement of 
the PAPC method in comparison to the ADMM method studied in \cite{aspelmeier2016local}
is two-fold:  first, we can address a wider variety of regularizing objective functions and, secondly,
the PAPC is dramatically more efficient, requiring less than $1/100$ of the time to 
achieve the kind of accuracy achieved in \cite{aspelmeier2016local}.  

The variational problem involves the construction of an estimator for an unknown {\em true} 
signal by
minimization of a convex functional $J$ over a convex set that is determined by the statistical extreme value behavior
of the residual. 
More precisely, we want to reconstruct the estimator $\bar{x}$ of the observed 
signal $b$ that is computed
as solutions of the convex optimization problem:
\begin{equation}\label{prob:smr}
\inf_{x\in X} J(x) \quad \text{s.t} \quad \max_{s\in\mathcal{S}}
\left|\sum_{\nu\in \mathcal{G}}\omega^s (Ax-b)_\nu \right| \leq q,
\end{equation}
where $J: X \rarr \R$ denotes a regularization functional, which incorporates a priori knowledge on
the unknown signal $\hx$ such as smoothness, $X$ is finite dimensional vector space,
$A : X \rarr X$ is some linear mapping, $\mathcal{S}$ denotes a system of subsets of the grid
$\mathcal{G}$ over $X$,
and $\{w^s : s\in\mathcal{S}\}$ is a set of positive weights on the grid 
which are typically normalized indicator functions on $s\in \mathcal{S}$.
The constant $q$ serves as a
regularization parameter which governs the trade-off between regularity and the fit to data of the reconstruction.

Solutions to problem
\ref{prob:smr} with SMRE constraints then have statistical content.  We are then able to obtain quantitative 
(i.e., statistical) information about  the iterates of the algorithm based on error 
bounds to solutions to \ref{prob:smr} made possible by the convergence result obtained 
in Theorem \ref{q-rate} and Corollary \ref{r-corollary}.
The problem can be rewritten as a saddle point problem as follows:
\begin{equation}
\tag{\Mcal}\label{smre:sadl}
\min_{x\in X} \max_{y\in Y}\left\{K(x,y) := J(x) + \bform{x,\mathcal{A}y} - G^*(y) \right\}
\end{equation}
where $y = (y_1, \ldots, y_{|\mathcal{\S}|})$, $y_s \in X,\; s=1, \ldots, |\mathcal{\S}|$
$\A y = \sum_{s=1}^{|\mathcal{S}|}A^Ty_i$ and $G^*$
is the conjugate function of the sum of indicator functions $ G(y) = \sum_s \delta_{C_s}(y) $ 
where each $C_s = \{ y: | \sum_{\nu \in \mathcal{G}} \omega^s (y - b) | \leq q\}$. Using the
Moreau's identity \eqref{e:Moreau}, the prox-mapping is evaluated in \eqref{papc:miny} for each
constraint by $$ \yk_s = \prox_{\sigma}^{\delta^*_{C_s}}(\ykp_s + \sigma A\pk)
= \ykp_s + \sigma A\pk - \sigma P_{C_s}\left(\frac{\ykp_s
+ \sigma A\pk}{\sigma}\right), \qquad s=1, \ldots, |\mathcal{S}|.$$ 
The proximal parameter is a function of $\tau $ and given by $\sigma= 1/ ( \tau  \|A^TA\|_2)$.
More details in \cite[Sect. 4.1]{drori2015simple}.

Our approach here is to use $J(x) = \sfrac{2}\normsq{\nabla x}$ and the smoothed TV norm.
Because the gradient $\nabla$  has a nontrivial kernel, we cannot guarantee generically that the 
objective $J$ is pointwise quadratically supportable.  We recover strong coercivity by imposing appropriate  
boundary conditions.    In the two-dimensional example, this is combined with a Huber function to 
demonstrate the full extent of the theory. 

% \pagebreak
\subsubsection{One-dimensional SMRE -- synthetic data}
In this example, we consider a one dimensional signal as shown in Figure \ref{fig:test-smr1d}(a),
and the corresponding noisy data Figure \ref{fig:test-smr1d}(b) with $n = 512$ data points.
The noisy data was generated by adding independently and identically distributed Gaussian random 
noise with standard deviation $0.02$ to each measured data point.
In this problem we consider only denoising, that is,  the operator $A$ in \eqref{prob:smr} is the
identity operator.  The qualitative objective is $J(x) = \sfrac{2}\normsq{\nabla x}$.   

The constraints are applied on all windows consisting of all 
intervals of length between $1$ and $L$ pixels.
The weights $\{w_j \in \Rn\}$ are scaled so that the windows are normalized, 
and $\mathcal{S}$ is the index set corresponding to all collections of successive 
pixels in $\{1, 2, . . . ,n\}$ of length from 1 to $l$, $1\leq l \leq L$,
where $L$ represents the total number of multiresolution levels used in the construction.
For a signal length $n = 512$ and $L=10$ multiresultion levels,
the number constraints are $|\mathcal{S}| = 5075$.  The constraints/windows have 
a separable block structure on nonoverlapping windows which can be exploited
for efficient implementation.   This allows us 
 to perform the parallel block updates in the dual space using $\sfrac{2}L(L+1)$ dual variables of
length $n$.
Figure \ref{fig:conf-smr1d} shows the schematic tiling of the nonoverlapping window constraints using
lengths varying up to $L=3$.

The objective 
function, $J(x) = \sfrac{2}\normsq{\nabla x}$, is  strongly convex if for every bounded 
neighborhood of a solution $x$, $q$ is
small enough such that there are no constant functions over the support of $3$ pixels around
$x_{i}$ which satisfy the constraint $|x_{i} - b_{i}| \leq q, \; 1 \leq i \leq n$.
We set the error
bound $q$ starting with $q_0$ corresponding to 3 standard deviations for the first level and for each subsequent 
multiresolution it is decreased based on
the window length by a factor $q_j = q_0 f^{l-1}$ is where $f\in(0,1)$ is a scaling factor and
$l\in\{1, \ldots, L\}$ is the scaling level. 
The last computed iterate is shown in Figure \ref{fig:test-smr1d}(c)
using one and ten multiresolution levels demonstrating the advantage of the
SMRE approach. With an estimated convergence rate of  $c=0.999$ for the run
with $\tau=0.2$ and $c = 0.9985$ for the run with $\tau=0.02$, this corresponds to an a posteriori
upper estimate of the error at iteration $k=1200$ of  $1.83*10^{-3}$, 
or two digits of accuracy at each pixel, and $1.09*10^{-4}$, or three digits of accuracy per pixel, 
respectively.

\begin{figure}
\centering
{\includegraphics[width=2.3in]{./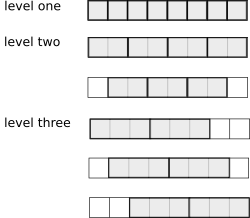}}
\caption{{\footnotesize Schematic window tiling of the multiscale constraints in the 1D case.}}
\label{fig:conf-smr1d}
\end{figure}

\begin{figure}[htp!]
\centering
\begin{subfigure}[]
{\includegraphics[width=2.9in]{./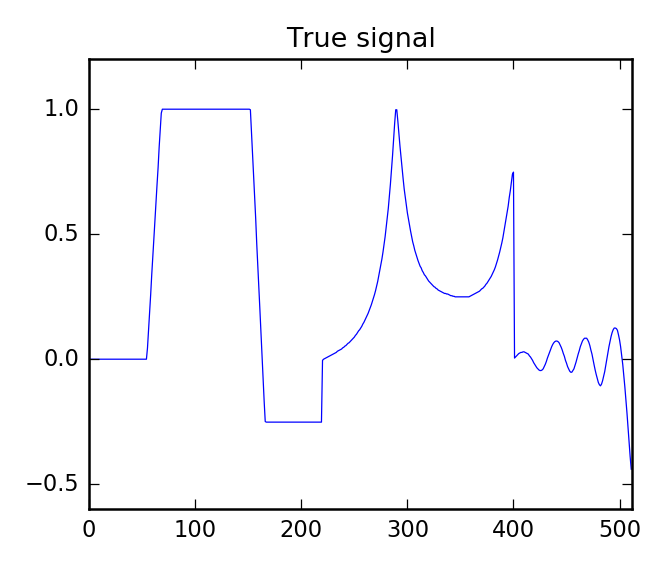}}
\end{subfigure}
\begin{subfigure}[]
{\includegraphics[width=2.9in]{./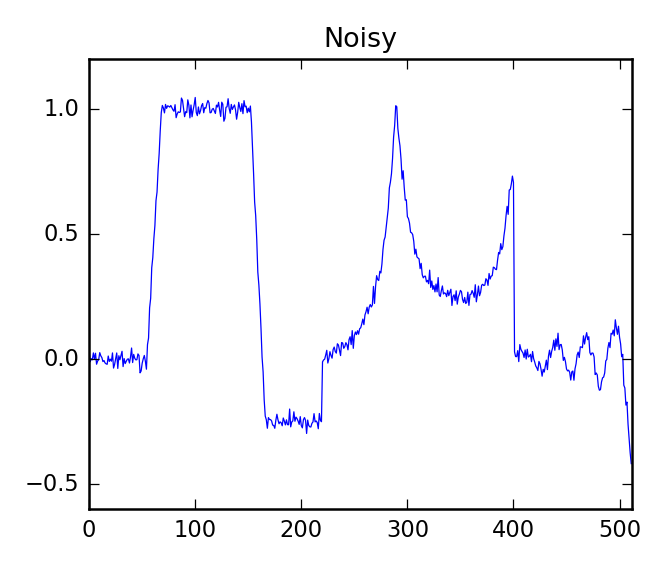}}
\end{subfigure}
\begin{subfigure}[]
{\includegraphics[width=2.9in]{./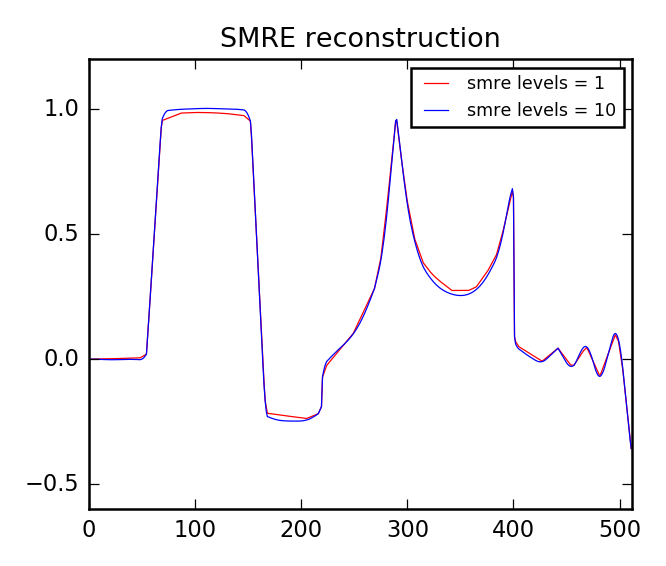}}
\end{subfigure}
\begin{subfigure}[]
{\includegraphics[width=2.9in]{./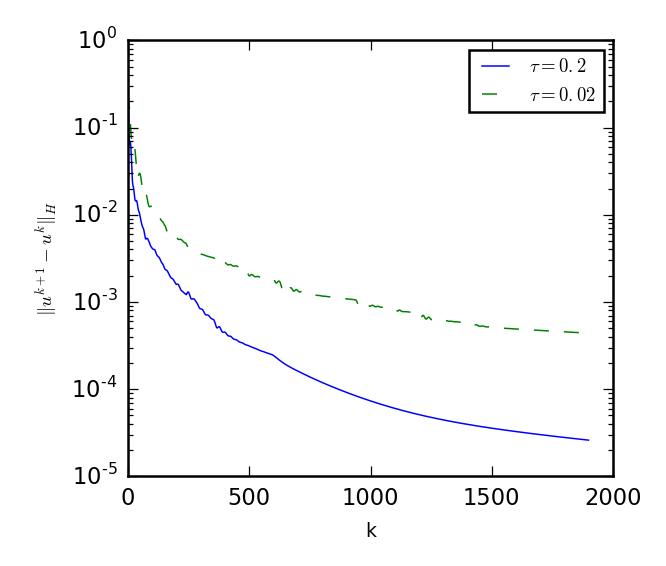}}
\end{subfigure}
\caption{{\footnotesize Results of statistical multiresolution reconstruction of
one dimensional signal with $L=1$ and $L=10$ levels using $q$ corresponding to $3$ standard deviations 
and scaling factor $f=0.93$.
(a) The original signal. (b) Noisy with additive Gaussian noise, standard deviation 0.02 (c)
The multiresolution signal reconstruction with number of levels $L=1$ and $L=10$. (d) 
The a posteriori error bound for linearly convergent iterations given by the distance of consecutive iterates
 $\|\ukn - \uk\|_H$ of the associated sequence as function of the no. of iterations. The $x$-axis
represents the number of iterations, and the $y$-axis represent the distance of consecutive iterates
$\|\ukn - \uk\|_H$ in logarithmic scale.
}}
\label{fig:test-smr1d}
\end{figure}

\pagebreak
\subsubsection{Two-dimensional  SMRE -- Image Laboratory Data}
We next consider the performance of our method in image reconstruction problems for 
Stimulated Emission Depletion (STED) microscopy experiment conducted at the 
Laser-Laboratorium G\"ottingen examining tubulin.
The SMRE model problem \eqref{prob:smr} described at length for the 1D domain can be
easily extended for an image deconvolution and denoising in the 2D case, where the linear operator
$A$ is the convolution using a point spread function.  

% (PSF) shown in Figure \ref{fig:testSTED}(b).
Here, we also consider the smooth approximation of total variation (TV) functional as the
qualitative objective. The TV functional is defined by $\|\nabla x\|_1$ where $x\in X=\R^{n\times
n}$ and is often used for image filtering and restoration. However, this function is
non-smooth where $|\nabla x| = 0$, thus in order to make the derivative-based methods possible 
we consider a smoothed approximation of the TV functional known as the so-called
Huber approximation.

The Huber loss function is defined as follows:
\begin{equation}\label{e:Huber}
 \|x\|_{1, \alpha} = \sum_{i,j} \phi_{\alpha}(x_{i,j}), \qquad  \phi_{\alpha}(t) =
\begin{cases} \frac{t^2}{2\alpha} & \text{if $|t| \leq\alpha$} \\
|t| - \twofrac{\alpha} & \text{if $|t| > \alpha$},
\end{cases}  
\end{equation}
where $\alpha > 0$ is a small parameter defining the trade-off between quadratic regularization (for
small values) and total variation regularization (for larger values). The function $\phi$ is smooth with
$\sfrac{\alpha}$-Lipschitz continuous derivative and  its derivative is given by 
\begin{equation}\label{e:DHuber}
\phi'_{\alpha}(t) = \begin{cases} \frac{t}{\alpha} & \text{if $|t| \leq\alpha$} \\
 \textrm{sgn}(t) & \text{if $|t| > \alpha$}.
 \end{cases}
\end{equation}

The objective functional
that is  used in \eqref{prob:smr} is $J(x) = \|\nabla x\|_{1,\alpha}$, where $x\in X=\Rnn$, and 
$\nabla$ is the vector in the vector space $Y = X \times X$. For
discretization of $\nabla : X\rarr Y$, we use standard finite differences with Neumann boundary
conditions, 
$$ (\nabla x)_{i,j} = \begin{pmatrix}(\nabla_1 x)_{i,j}\\ (\nabla_2 x)_{i,j}\end{pmatrix}, \qquad
1\leq i,j \leq n, $$ where 
$$(\nabla_1 x) = \begin{cases}
x_{i+1,j} - x_{i,j} & \text{if $1\leq i < n-1$} \\
 0 & \text{if $i=n-1$}
\end{cases}
\qquad (\nabla_2 x) = \begin{cases}
x_{i,j+1} - x_{i,j} & \text{if $1\leq j < n-1$} \\
 0 & \text{if $j=n-1$}.
\end{cases}
$$
The Laplacian computed at the pixel $(i,j)$ is given using the standard finite differences above as
$(\Delta x)_{i,j} = 4x_{i,j} - x_{i-1,j} - x_{i+1,j} - x_{i, j+1} - x_{i, j-1}$. The objective $J(x)$ is 
continuously differentiable with
Lipschitz gradient constant $L_J = \|\nabla\|^2/\alpha \leq 8/\alpha$. 
 This function is pointwise quadratically supportable at solutions as long as on the small bounded
neighborhood of a solution $\bar{x}$, where $|(\Delta \bar{x})_{i,j}| < \alpha$, $q$ is small enough that
there are no constant functions over the support of $3\times 3$ around $\bar{x}_{i,j}$ which satisfy the
constraints $|(A\bar{x})_{i,j} - b_{i,j}| \leq q, \; 1 \leq i,j \leq n$.  This much we must assume.

We demonstrate our results of reconstruction a close-up image shown in
Figure \ref{fig:testSTED} of dimension size $n\times n$ where $n=64$ using the statistical
multiresolution levels approach. The confidence level $q$ was set to $0.07$.
The graphs in Figure \ref{fig:resultSTED} show the a posteriori error bound results of the algorithm's
performance with quadratic objective $\sfrac{2}\|\nabla x\|^2$ and the Huber smooth approximation
$\|\nabla x\|_{1,\alpha}$, where $\alpha = 0.25$. The step size of the gradient step of both runs
was set to $\tau = 0.02$. 
The quadratic model  achieves a better rate
of convergence. 
With an estimated convergence rate of  $c=0.9993$ for the Huber objective
this corresponds to an a posteriori upper estimate of the error at iteration $k=800$ 
of $2.4*10^{-3}$.  With an estimated convergence rate of  $c=0.9962$ for the 
quadratic objective function this corresponds to an a posteriori upper estimate of the error at 
iteration $k=800$ of $1.5*10^{-3}$ -- about two digits of accuracy at each pixel.

\begin{figure}[htp!]
\centering
\begin{subfigure}[]
{\includegraphics[width=3.2in]{./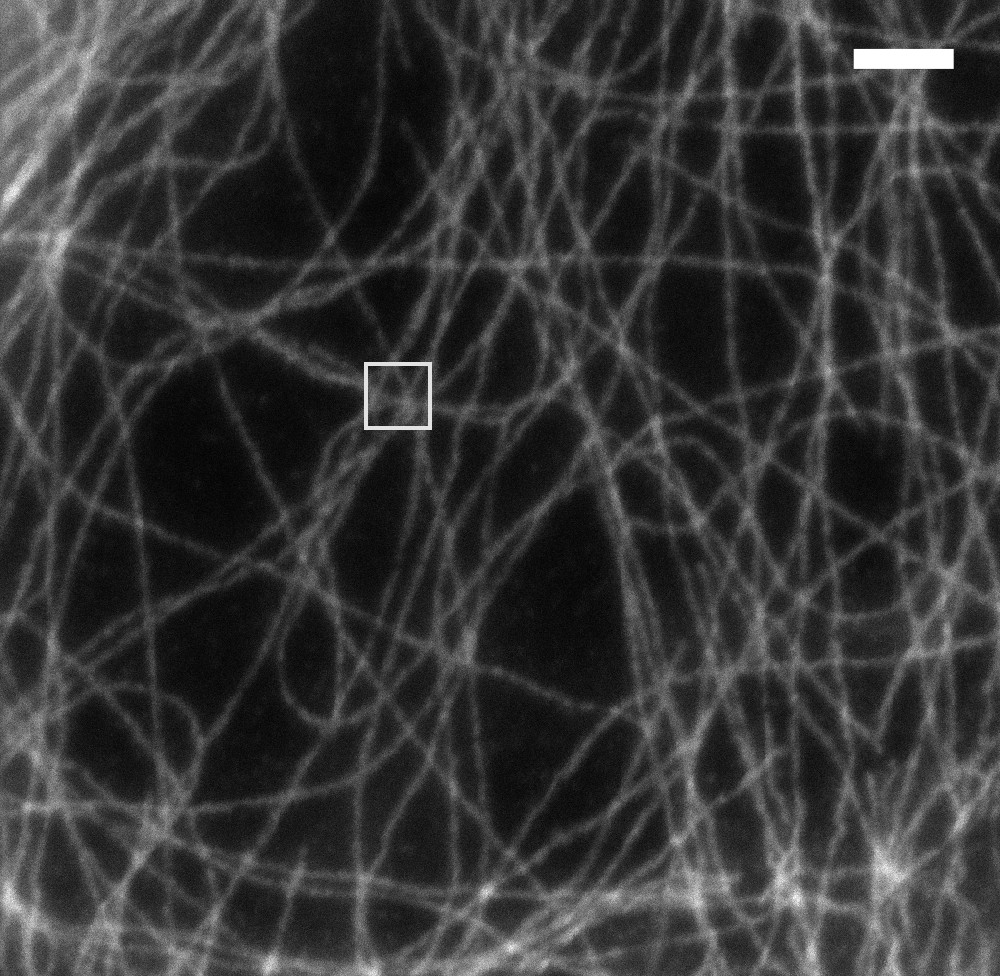}}
%\label{fig1:a}
\end{subfigure}
\begin{subfigure}[]
{\includegraphics[width=1.7in]{./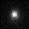}}
%\label{fig:testSTED:blur}
\end{subfigure} \\
\begin{subfigure}[]
{\includegraphics[width=3.0in]{./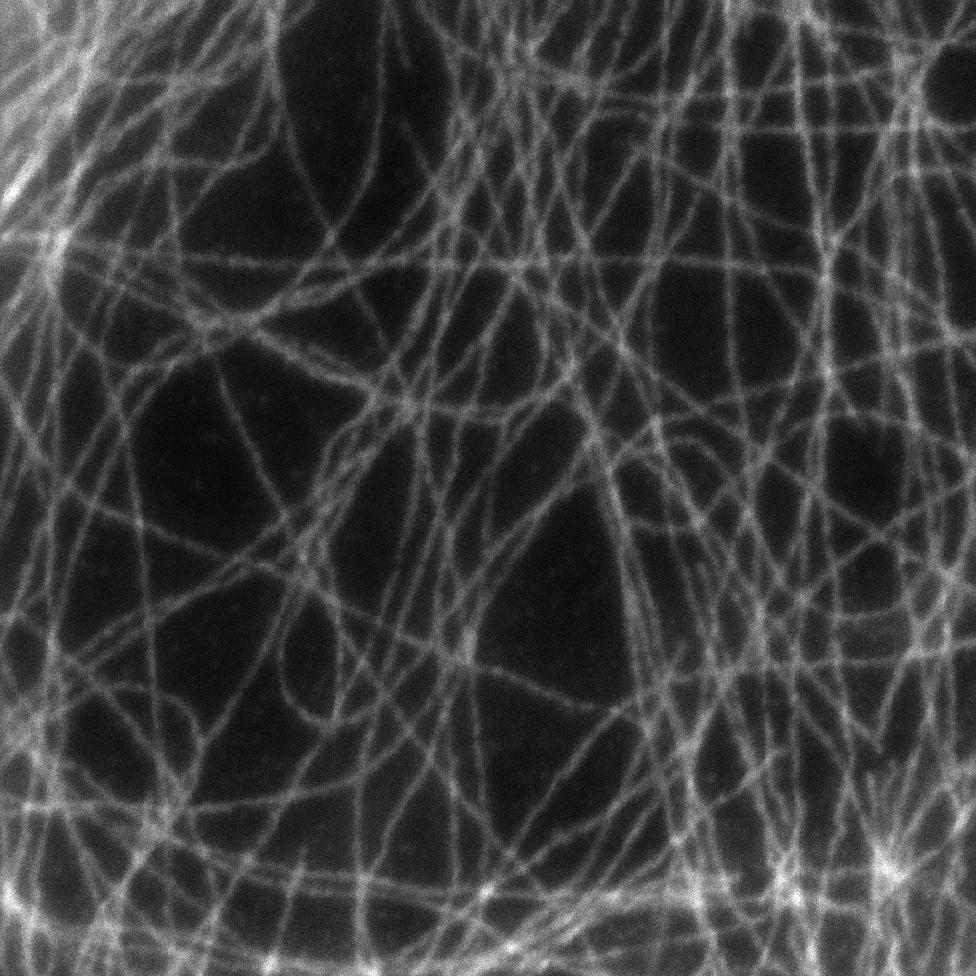}}
%\label{fig1:a}
\end{subfigure}
\begin{subfigure}[]
{\includegraphics[width=3.0in]{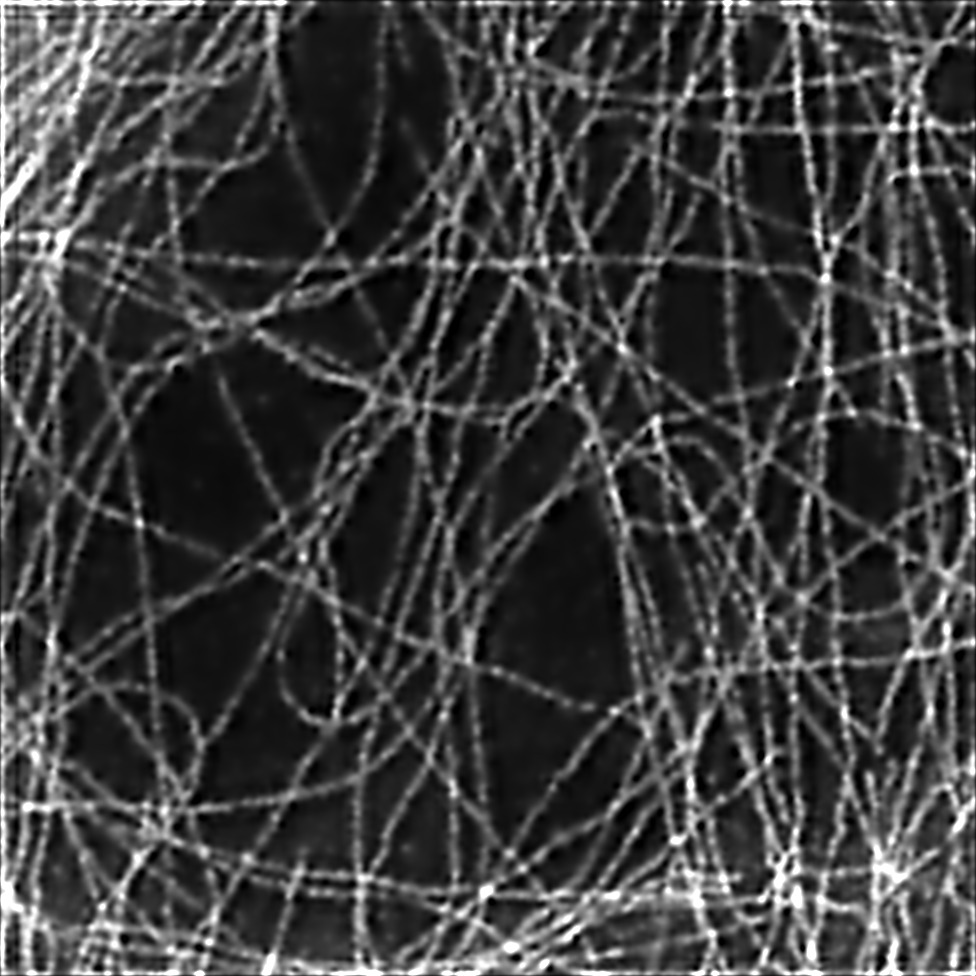}}
%\label{fig:testSTED:blur}
\end{subfigure}
\caption{{\footnotesize 
(a) Original data (STED image of Tubulin) with a close-up box to be processed, (b) PSF image,  (c)
full image deconvolution and denoising using the statistical multiresolution method with $L=3$ with
the objective functional
}}
\label{fig:testSTED}
\end{figure}

\begin{figure}[htp!]
\centering
\begin{subfigure}[]
{\includegraphics[width=2.0in]{./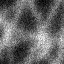}}
\label{fig1:a}
\end{subfigure}
\begin{subfigure}[]
{\includegraphics[width=2.0in]{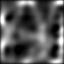}}
\end{subfigure}\\
\begin{subfigure}[]
{\includegraphics[width=2.0in]{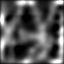}}
\end{subfigure} 
\begin{subfigure}[]
{\includegraphics[width=2.0in]{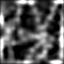}}
\end{subfigure} \\
\begin{subfigure}[]
{\includegraphics[width=3.5in]{./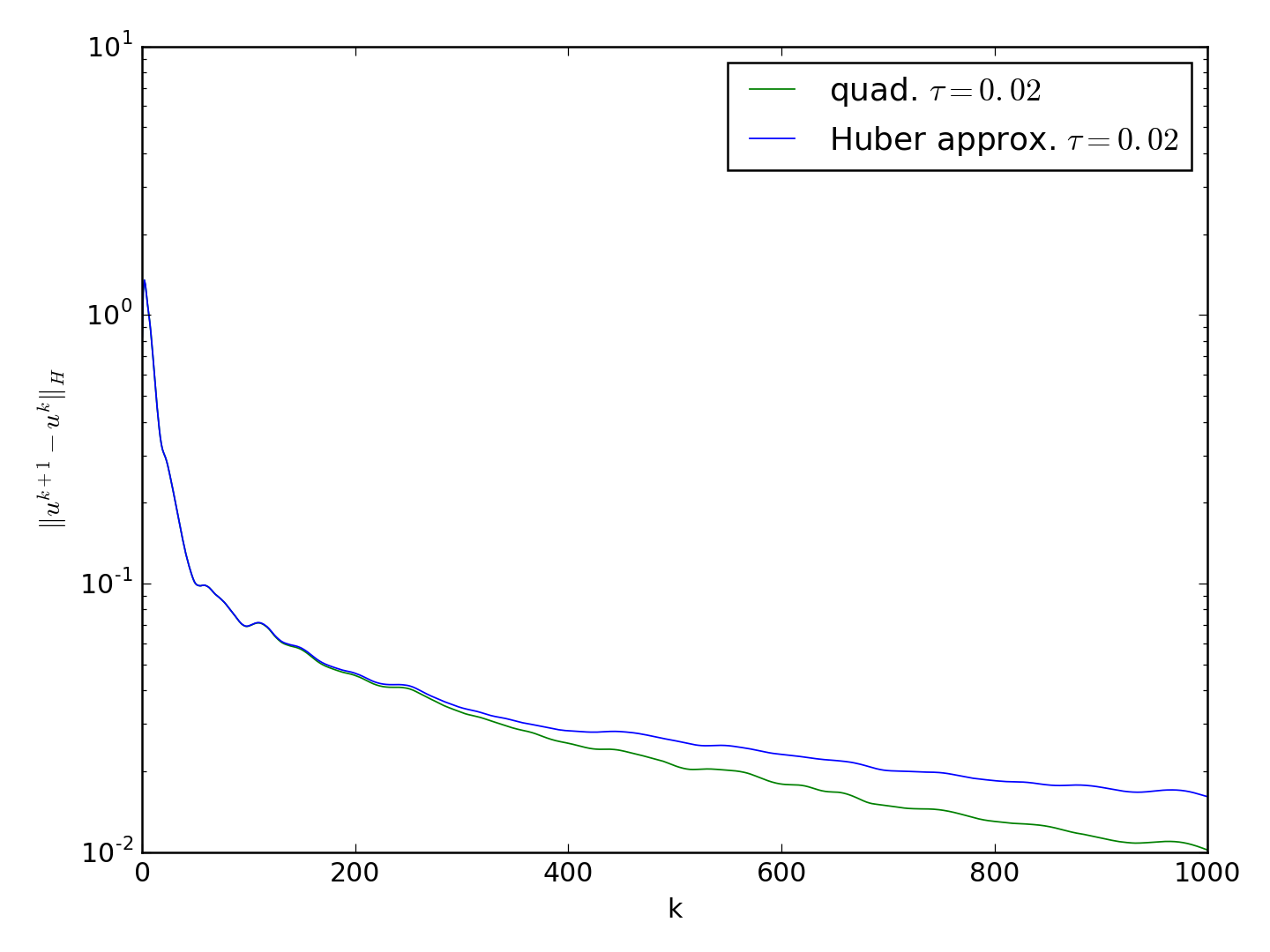}}
\end{subfigure}
\caption{{\footnotesize 
Results of image denoising and deconvolution using statistical multiresolution estimation with three levels. 
(a)-(d) depicts the evolution of the image during
reconstruction after 0, 50, 150, 1000 iterations with $\|x\|_{0.25,1}$. (e)
The primal sequence increment $\|\ukn - \uk\|_H$ of the associated sequence as function of the
number of iterations.
}}
\label{fig:resultSTED}
\end{figure}

\newpage
\pagebreak

%%%%%%%%%%APPENDIX

\appendix

\section{Appendix}
It is well known that the minimum eigenvalue of the negative of the Laplacian is
positive under Dirichlet boundary condition.  
% Consider the eigenvalues problem of the negative of the Laplacian operator $$ \begin{cases}
% - \Delta u &= \lambda u, \quad x\in\Omega\\
% u &= 0, \quad x\in\partial\Omega 
% \end{cases}
% $$
Consider the eigenvalues problem of the negative of the Laplacian operator in 1D 
\begin{equation}\label{eiglaplace}
\begin{cases}
&-\Delta x = \lambda x, \\
&x_0 = x_{n+1} = 0, \quad x\in\Rn  
\end{cases}
\end{equation}
The Laplacian operator (here the second order derivative) can be discretized using finite
differencing method, and here we consider using the standard first order discretizations 
$(-\Delta x)_j = \left(-x_{j-1} + 2x_j - x_{j+1}\right)$, $j=1, \ldots, n$.
The last expression needs to be adjusted at the boundary points based on \eqref{eiglaplace}.

Let us use the discretized solution based on the continuous case eigenvector $x_{kj} := (x_k)_j =
\sin\left(\frac{k\pi j}{n+1}\right)$ which satisfies $(x_k)_0 = (x_k)_{n+1}= 0$, and we show
that it is indeed the eigenvector $k$ computed for the component $j$ (without the normalization
constant).
%For the finite difference method, 
%we take the restriction of the continuous eigenvectors to the grids points, 
%without the normalization constant, and 
We plug in to the Laplacian matrix to get: 
%$v_{kj} := v_k[j]$ is the eigenvector $k$ computed at point $j$.
\begin{align*}(-\Delta x_k )_j &= 
-\sin\left(\frac{k\pi(j-1)}{n+1}\right) + 2\sin\left(\frac{k\pi j}{n+1}\right) -
\sin\left(\frac{k\pi(j-1)}{n+1}\right) \\&=  \left[2 - 2
\cos\left(\frac{k\pi}{n+1}\right)\right]\sin \left(\frac{k\pi j}{n+1}\right) \\
&= 4\sin^2\left(\frac{k \pi}{2n+2}\right) \sin \left(\frac{k\pi j}{n+1}\right).
\end{align*}
This shows that $x_{kj}$ is eigenvector with eigenvalue $4\sin^2\left(\frac{k \pi}{2(n+1)}\right)$.
The minimum eigenvalue of the nagative of the Laplacian under Dirichlet boundary condition is thus
$$\min_{\substack{x \neq 0\\ x_0=0, x_{n+1} =
0}} \frac{ -x^T\Delta x}{x^T x}  = \frac{\sumj 4\sin^2\left(\frac{\pi}{2n+2}\right)
\sin \left(\frac{\pi j}{n+1}\right)^2}{\sumj \sin
\left(\frac{\pi j}{n+1}\right)^2} = 4\sin^2\left(\frac{\pi}{2n+2}\right).$$

\section*{Acknowledgments}
We are grateful to Jennifer Schubert of the Laser-Laboratorium G\"ottingen for 
providing us with the STED measurements shown in Fig.~\ref{fig:testSTED}.
Thanks to Yura Malitsky for valuable comments during the preparation of this 
work. 

% \bibliographystyle{plain}
% % \bibliographystyle{abbrv}
% \bibliography{references}

\end{document}